\DeclareMathOperator{\id}{id}
\DeclareMathOperator{\Tor}{Tor} \DeclareMathOperator{\sgn}{sgn}
\DeclareMathOperator{\bideg}{bideg} \DeclareMathOperator{\im}{im}
\DeclareMathOperator{\mdeg}{mdeg} \DeclareMathOperator{\wt}{wt}
\DeclareMathOperator{\supp}{supp}
\newcommand{\Zo}{\mathbb{Z}}
\newcommand{\Ro}{\mathbb{R}}
\newcommand{\Co}{\mathbb{C}}
\newcommand{\Noo}{\mathbb{N}}
\newcommand{\ko}{\Bbbk}
\newcommand{\Zt}{\Zo_2}
\newcommand{\eqd}{\stackrel{\text{\tiny def}}{=}}
\newcommand{\congd}{\stackrel{\text{\tiny def}}{\cong}}
\newcommand{\Sn}{S_9}
\newcommand{\res}[1]{{\widetilde{#1}}}
\newcommand{\F}{\mathcal{F}}
\newcommand{\Hr}{\tilde{H}}
\newcommand{\Z}{\mathcal{Z}}
\newcommand{\Zr}{{_{\Ro}\mathcal{Z}}}
\newcommand{\sr}{s_{\Ro}}
\newcommand{\rr}{r_{\Ro}}
\newcommand{\Ur}{{_{\Ro}U}}
\newcommand{\A}{\mathcal{A}}
\newcommand{\R}{\mathcal{R}}
\newcommand{\Cc}{\mathcal{C}}
\newcounter{stmcounter}[section]
\newcounter{thcounter}
\newcounter{defcounter}[section]
\newcounter{problcounter}
\numberwithin{equation}{section}
\theoremstyle{plain}
\newtheorem{cor}[stmcounter]{Corollary}
\newtheorem{stm}[stmcounter]{Statement}
\newtheorem{thm}[thcounter]{Theorem}
\newtheorem{prop}[stmcounter]{Proposition}
\newtheorem{lemma}[stmcounter]{Lemma}
\newtheorem{defin}[stmcounter]{Definition}
\newtheorem{problem}[problcounter]{Problem}
\newtheorem{que}[stmcounter]{Question}
\newtheorem{claim}[stmcounter]{Claim}
\theoremstyle{definition}
\newtheorem{ex}[stmcounter]{Example}
\newtheorem{rem}[stmcounter]{Remark}
\newtheorem{con}[stmcounter]{Construction}
\begin{document}

\title[Buchstaber numbers and classical invariants]{Buchstaber numbers and classical
invariants of simplicial complexes}

\author{Anton Ayzenberg}
\thanks{The author is supported by the JSPS postdoctoral fellowship program}

\address{Osaka City University}
\email{ayzenberga@gmail.com}

\begin{abstract}
Buchstaber invariant is a numerical characteristic of a simplicial
complex, arising from torus actions on moment-angle complexes. In
the paper we study the relation between Buchstaber invariants and
classical invariants of simplicial complexes such as bigraded
Betti numbers and chromatic invariants. The following two
statements are proved. (1) There exists a simplicial complex $U$
such that $s(U)\neq \sr(U)$. (2) There exist two simplicial
complexes with equal bigraded Betti numbers and chromatic numbers,
but different Buchstaber invariants. To prove the first theorem we
define Buchstaber number as a generalized chromatic invariant.
This approach allows to guess the required example. The task then
reduces to a finite enumeration of possibilities which was done
using GAP computational system. To prove the second statement we
use properties of Taylor resolutions of face rings.
\end{abstract}

\maketitle

\section{Introduction}\label{SecIntro}

Let $K$ be a simplicial complex on a set of vertices
$[m]=\{1,2,\ldots,m\}$. In toric topology a special topological
space, called moment-angle complex, is associated to $K$.

\begin{defin}[Moment-angle complex \cite{BP2,BP}]\label{definMA}\mbox{}

(1) Let $D^2\subset\Co$ be the unit disk and $S^1$ --- its
boundary circle. For any simplex $I\in K$ define the subset
$(D^2,S^1)^I\subset (D^2)^m$, $(D^2,S^1)^I =
(D^2)^I\times(S^1)^{[m]\setminus I}$. Here, in the product, disks
stand on the positions from $I$ and circles stand on all other
positions. The moment-angle complex of $K$ is the topological
space
$$
\Z_K = \bigcup_{I\in K}(D^2,S^1)^I \subseteq (D^2)^m.
$$
This subset is preserved by the coordinatewise action of the
compact torus $T^m=(S^1)^m~\curvearrowright~(D^2)^m$, where each
component $S^1$ acts on corresponding $D^2\subset \Co$ by
rotations. This defines the action $T^m\curvearrowright \Z_K$.

(2) Let $D^1=[-1;1]\subset \Ro$ and $S^0=\partial D^1=\{-1,1\}$.
For any simplex $I\in K$ define the subset $(D^1,S^0)^I\subset
(D^1)^m$, $(D^1,S^0)^I = (D^1)^I\times(S^0)^{[m]\setminus I}$. The
\emph{real moment-angle complex} of $K$ is the topological space
$$
\Zr_K = \bigcup_{I\in K}(D^1,S^0)^I \subseteq (D^1)^m.
$$
This subset is preserved by the coordinatewise action of the
finite group $\Zt^m\curvearrowright (D^1)^m$. Here the group
$\Zt=\Zo/2\Zo$ acts on $D^1\subset\Ro$ by changing sign. This
defines the action $\Zt^m\curvearrowright \Zr_K$.
\end{defin}

Constructions in toric topology, in particular moment-angle
complexes, give rise to interesting and nontrivial invariants of
simplicial complexes. Note that the actions $T^m\curvearrowright
\Z_K$ and $\Zt^m\curvearrowright \Zr_K$ are not free if $K$ has at
least one nonempty simplex. The main objects of this paper are
Buchstaber invariants measuring \emph{the degree of symmetry} of
moment-angle complexes.

\begin{defin}[Buchstaber invariant]\label{definBuchNum}\mbox{}

(1) The (ordinary) Buchstaber invariant $s(K)$ of a simplicial
complex $K$ is the maximal rank of toric subgroups $G\subset T^m$
for which the restricted action $G\curvearrowright \Z_K$ is free.

(2) The real Buchstaber invariant $\sr(K)$ is the maximal rank of
subgroups $G\subset \Zt^m$ for which the restricted action
$G\curvearrowright \Zr_K$ is free.
\end{defin}

Here ``rank of subgroup $G\subset\Zt^m$'' means the dimension of
$G$ as a vector subspace over the field of two elements. This
finite field will also be denoted by $\Zt$.


Several approaches to the study of Buchstaber invariants are
developed up to date \cite{Izm1,Izm2,ErArx,ErNew,FM}. We refer to
\cite{ErNewBig} for the comprehensive review of this field. In
this paper we study the connection of Buchstaber invariants with
each other and with other invariants of simplicial complexes.

Generally, there is a bound
\begin{equation}\label{eqBuchGenBound}
1\leqslant s(K)\leqslant \sr(K)\leqslant m-\dim K-1
\end{equation}
In toric topology the case $s(K)=\sr(K)=m-\dim K-1$ is the most
important; it appears quite often. Still there are many examples
of $K$ for which $1<\sr(K)<m-\dim K-1$ or $1<s(K)<m-\dim K-1$. It
is always very difficult to compute $s(K)$ for such examples
(Section \ref{SecRealComplex} contains an example of such
computation). The real invariant $\sr(K)$ is easier because its
calculation allows computer-aided analysis. Thus an important
question is: whether $s(K)=\sr(K)$ for any complex $K$? The answer
is negative.

\begin{thm}\label{thmRealComplex}
There exists a simplicial complex $U$ of dimension $3$ such that
$s(U)~\neq~\sr(U)$.
\end{thm}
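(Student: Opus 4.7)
The strategy is to recast both $s$ and $\sr$ as generalized chromatic invariants of $K$, to use this viewpoint to guess the required complex $U$, and then to verify the example by a finite enumeration on a computer. Since \eqref{eqBuchGenBound} gives $s(U)\leqslant \sr(U)$, it suffices to produce $U$ with $s(U)<\sr(U)$.

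The starting point is a duality translation of Definition \ref{definBuchNum}. A subgroup $H\subset T^m$ of rank $r$ acts freely on $\Z_K$ if and only if there exists a homomorphism $\Lambda\colon\Zo^m\to\Zo^{m-r}$ whose restriction to $\Zo^I$ is a split monomorphism (equivalently, $\{\Lambda(e_i)\}_{i\in I}$ extends to a $\Zo$-basis) for every simplex $I\in K$, and the analogous statement over $\Zt$ characterizes $\sr(K)$. We regard $\Lambda$ as an assignment of a ``color'' $\Lambda(e_i)$ to each vertex $i$, with unimodularity/independence imposed on each simplex; this is the generalization of a proper vertex coloring announced in the abstract. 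In this language $s(K)<\sr(K)$ precisely means that some $\Zt$-coloring of the vertices of $K$ fails to admit any integral lift.

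This viewpoint suggests where to look for $U$. The obstruction to lifting a $\Zt$-coloring to a $\Zo$-coloring is matroidal, and its prototype is the Fano plane: the seven nonzero vectors of $\Zt^3$ satisfy independence relations that cannot be realized by seven primitive vectors of $\Zo^3$. We therefore search among $3$-dimensional complexes on a modest vertex set for $U$ whose facets force, via their intersection pattern, a Fano-type configuration in any hypothetical $\Zo$-characteristic map of rank $m-\sr(U)$. Once such a $U$ is guessed, the $\Zt$-side is a straightforward exercise in $\Zt$-linear algebra producing an explicit characteristic map of the advertised rank. The $\Zo$-side is reduced to a finite problem by considering candidate integer maps modulo the $\mathrm{GL}_{m-\sr(U)}(\Zo)$-action on the target and the symmetry group of $U$ on the source, normalizing the first several columns and then bounding the entries of the rest; the resulting finite list of candidates is enumerated in GAP and each is checked against the facets of $U$.

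The main obstacle is precisely this reduction to a finite enumeration: a priori the $\Zo$-search is infinite, and one must argue carefully that after passing to normal forms only finitely many candidates remain for the chosen $U$, so that the GAP enumeration is exhaustive. Once this is done the verification is mechanical. A secondary difficulty is locating a $U$ small enough that the GAP search terminates yet large enough that an integer obstruction genuinely appears, so some experimentation with candidates suggested by the Fano heuristic is to be expected.
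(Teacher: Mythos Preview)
Your plan is essentially the paper's: reformulate $s$ and $\sr$ as generalized chromatic invariants, use that viewpoint to guess the candidate, reduce the integral side to a finite check by normalization, and verify in GAP. One caution on the heuristic: the Fano prototype is misleading here, because the paper proves $s(K)=\sr(K)$ whenever $\dim K\leqslant 2$, and in particular the Fano independence complex $\Ur_3$ \emph{does} admit a non-degenerate map to $U_3$ (such maps need only preserve independence, not dependence, so matroid non-representability over $\Zo$ is not the relevant obstruction). The paper's chromatic framework does better than a heuristic: it shows that if $s\neq\sr$ anywhere then already $s(\Ur_n)\neq\sr(\Ur_n)$ for some $n$, which together with the $\dim\leqslant 2$ result forces $U=\Ur_4$ on $15$ vertices as the first candidate and eliminates the experimentation you anticipate. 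The finite reduction you flag as the main obstacle is then carried out by exploiting the abundance of $4$-element bases in $\Ur_4$: after normalizing five vertices one shows every odd-parity entry of a putative integral lift is $\pm1$ and every even-parity entry lies in $\{0,2\}$, after which the GAP enumeration is genuinely exhaustive.
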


\rem Theorem \ref{thmRealComplex} was announced without a proof in
\cite{AyzArx}. The proof was published later in \cite{Ayzs}, but
unfortunately, that issue of the journal was not published in
English. We provide the proof here (Section \ref{SecRealComplex}).

The second block of questions asks about the relation between
Buchstaber invariants and other well-studied invariants. If
$\A(\cdot)$ is an invariant (possibly, a set of invariants) of a
simplicial complex, then the general question is:

\begin{que}\label{queInvProblem} Does $\A(K)=\A(L)$  imply
$s(K)=s(L)$ or $\sr(K)=\sr(L)$?
\end{que}

There are several natural candidates for $\A(\cdot)$:
\begin{itemize}
\item Chromatic number $\gamma(K)$ or its generalizations;
\item $f$-vector (or, equivalently, $h$-vector) of $K$;
\item Topological characteristics of $K$,
e.g. Betti numbers;
\item Topological characteristics of the moment-angle complex $\Z_K$.
\end{itemize}

Classical chromatic number $\gamma(K)$ on itself is too weak
invariant for rigidity question \ref{queInvProblem} to make sense.
On the other hand, Buchstaber invariants can themselves be
considered as generalized chromatic invariants (see Section
\ref{SecApproaches}). N.\,Erokhovets \cite{Er,ErArx} proved that
Buchstaber invariants are not determined by the $f$-vector and the
chromatic number. More precisely, he constructed two simplicial
polytopes, whose boundaries have equal $f$-vectors and chromatic
numbers, but Buchstaber invariants are different.

The cohomology ring of a moment-angle complex is the subject of
intensive study during last fifteen years. It is known
\cite{BP2,Franz} that,
\begin{equation}\label{eqCohomMA}
H^*(\Z_K;\ko)\cong \Tor^{*,*}_{\ko[m]}(\ko[K],\ko) =
\bigoplus\limits_{\ell,j}\Tor^{-\ell,2j}_{\ko[m]}(\ko[K],\ko)
\end{equation}
--- the $\Tor$-algebra of a Stanley--Reisner ring. The
dimensions of graded components
\begin{equation}\label{eqBigrBettiDef}
\beta^{-\ell,2j}(K)\eqd\dim_{\ko}\Tor^{-\ell,2j}_{\ko[m]}(\ko[K],\ko).
\end{equation}
are called bigraded Betti numbers of $K$. In general they depend
on the ground field $\ko$. These invariants represent a lot of
information about $K$ \cite{Stan,BP}. In particular, from bigraded
Betti numbers it is possible to extract: the $h$-vector of $K$;
the ordinary Betti numbers of $K$ and the ordinary Betti numbers
of $\Z_K$ by formulas:
$$
h_0(K)+h_1(K)t+\ldots+h_n(K)t^n=\dfrac{1}{(1-t)^{m-n}}\sum
\beta^{-\ell,2j}(-1)^{\ell} t^j\quad\mbox{\cite[Th.7.15]{BP}};
$$
$$
\dim \Hr^i(K;\ko)=\beta^{-(m-i-1),2m}(K)\quad\mbox{(the part of
Hochster's formula \cite{Hoch},\cite[Th.3.27]{BP})};
$$
$$
\dim
H^i(\Z_K;\ko)=\sum\limits_{-\ell+2j=i}\beta^{-\ell,2j}(K)\quad\mbox{(follows
from \eqref{eqCohomMA})},
$$
where $n=\dim K+1$. Note, that bigraded Betti numbers do not
determine the dimension of $K$. The cone over $K$ always has the
same bigraded Betti numbers as $K$ but the dimension is different.

So far, $\beta^{-i,2j}(K)$ (together with $\dim K$) is a very
strong set of invariants. The question~\ref{queInvProblem} makes
sense for this set of invariants. Still the answer is negative.

\begin{thm}\label{thmBettiBuch}
There exist simplicial complexes $K_1$ and $K_2$ such that
\begin{enumerate}
\item $\beta^{-i,2j}(K_1)=\beta^{-i,2j}(K_2)$ for all $i,j$;
\item $\dim K_1=\dim K_2$;
\item $\gamma(K_1)=\gamma(K_2)$;
\item $s(K_1)\neq s(K_2)$ and $\sr(K_1)\neq\sr(K_2)$.
\end{enumerate}
\end{thm}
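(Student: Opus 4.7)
\textbf{Proof plan for Theorem \ref{thmBettiBuch}.}
The strategy is to exhibit an explicit pair of complexes $K_1,K_2$ and verify the four listed properties, using Taylor resolutions of face rings to control bigraded Betti numbers.

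First, I would recall the Taylor complex of the Stanley--Reisner ideal $I_K=(x^{\sigma_1},\ldots,x^{\sigma_r})$, whose generators are indexed by the minimal non-faces $\sigma_1,\ldots,\sigma_r\subseteq[m]$ of $K$. It carries a rank-one free summand in multidegree $\bigcup_{k\in S}\sigma_k$ for every subset $S\subseteq\{1,\ldots,r\}$, and is always a (in general non-minimal) free resolution of $\ko[K]$ over $\ko[v_1,\ldots,v_m]$. In particular,
\[
\beta^{-i,2j}(K)\leqslant\#\Bigl\{S\subseteq[r]:|S|=i,\ \Bigl|\bigcup_{k\in S}\sigma_k\Bigr|=j\Bigr\},
\]
with equality whenever the Taylor complex is minimal.

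Next I would isolate the sufficient condition for minimality: for every nonempty $S\subseteq[r]$ and every $k\in S$, the union $\bigcup_{i\in S}\sigma_i$ strictly contains $\bigcup_{i\in S\setminus\{k\}}\sigma_i$. Under this hypothesis the Taylor differential vanishes modulo the maximal ideal, so $\beta^{-i,2j}(K)$ depends only on the ``union profile'' of $\{\sigma_1,\ldots,\sigma_r\}$, that is, on the multiset of cardinalities $\bigl|\bigcup_{i\in S}\sigma_i\bigr|$ as $S$ ranges over subsets. Consequently any two complexes with Taylor-minimal face rings and matching union profiles share all bigraded Betti numbers.

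The heart of the proof is the construction of such a pair $K_1,K_2$ on a common vertex set $[m]$, of equal dimension and equal chromatic number, whose minimal non-faces have matching union profile but whose moment-angle complexes admit different maximal free toric (respectively real) subactions. I would search among small complexes whose minimal non-faces form an incidence structure admitting two non-isomorphic realizations with the same cardinality pattern of pairwise, triple-wise, etc.\ unions. The chromatic number, being determined by the $1$-skeleton (or equivalently by the graph of $2$-element non-faces), can be tuned by adjusting which minimal non-faces have size $2$. The Buchstaber invariants would be separated using the generalized-chromatic reinterpretation from Section~\ref{SecApproaches}, and, if necessary, by a finite enumeration analogous to the one used in Theorem~\ref{thmRealComplex}.

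The principal obstacle is the construction itself: simultaneously matching union profile (hence bigraded Betti numbers), dimension, and chromatic number while forcing both $s(K_1)\neq s(K_2)$ and $\sr(K_1)\neq\sr(K_2)$ is a delicate combinatorial balance, and the Taylor-minimality hypothesis makes the candidate complexes rather rigid. Once the pair $(K_1,K_2)$ is found, verification of items (1)--(3) reduces to inspection of minimal non-faces and a direct Taylor count, while item (4) follows from the chromatic characterization of Buchstaber invariants, possibly aided by the same GAP-based enumeration employed for Theorem~\ref{thmRealComplex}.
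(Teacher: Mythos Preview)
Your framework is correct and matches the paper's: force the Taylor resolution to be minimal so that bigraded Betti numbers are read off from the union profile of the minimal non-faces, then find two such complexes whose profiles coincide but whose Buchstaber numbers differ. However, two concrete ingredients that make the paper's proof go through are missing from your plan.

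First, you do not say how to manufacture Taylor-minimal complexes with a prescribed intersection pattern of non-faces. The paper does this by a simple ``resolution'' trick: start from \emph{any} complex $L$ with non-faces $N(L)=\{J_1,\ldots,J_r\}$, adjoin one fresh vertex $a_{J_k}$ per non-face, and declare the new non-faces to be $\widetilde{J_k}=J_k\sqcup\{a_{J_k}\}$. The resulting complex $\widetilde L$ automatically satisfies the Taylor-minimality condition (each $\widetilde{J_k}$ has a private vertex), and its union profile is a shift of that of $L$: $\bigl|\bigcup_{k\in S}\widetilde{J_k}\bigr|=\bigl|\bigcup_{k\in S}J_k\bigr|+|S|$. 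This reduces the search to finding two set-systems on a small ground set with the same intersection profile, which is easy to do by hand.

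Second, and more seriously, your plan for separating the Buchstaber invariants is vague (``generalized-chromatic reinterpretation \ldots\ possibly aided by GAP''). The paper instead invokes Erokhovets' criterion (Statement~\ref{stmErokhCrit}): $s(K)\geqslant 2$ (equivalently $\sr(K)\geqslant 2$) holds if and only if three minimal non-faces have empty common intersection. This turns item~(4) into a one-line combinatorial check on the set-systems $\Cc_1,\Cc_2$ --- one has three complementary sets covering $[9]$, the other does not --- and simultaneously handles both $s$ and $\sr$. Without this criterion you have no mechanism to guarantee $s(K_1)\neq s(K_2)$, and a brute search over characteristic functions in $\Zo^r$ is not a finite problem. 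Once both ingredients are in place, items (2) and (3) follow by inspection: the $\widetilde{\,\cdot\,}$ construction makes the $1$-skeleton complete (all non-faces have size $\geqslant 3$), fixing $\gamma$, and $\dim$ is read off from the complements of the $\Cc_i$.
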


We also show that $\Tor$-algebras of the constructed complexes
$K_1$ and $K_2$ have trivial multiplications. Thus not only
bigraded Betti numbers but also multiplicative structure of
$H^*(\Z_K)$ does not determine Buchstaber invariant.

The paper consists of two essential parts which are independent
from each other. Sections~\ref{SecApproaches}
and~\ref{SecRealComplex} form the first part. Theorem
\ref{thmRealComplex} is proved in Section~\ref{SecRealComplex}.
Section~\ref{SecApproaches} clarifies the combinatorial meaning of
Buchstaber invariants and contains definitions and constructions
necessary for understanding the proof. In the second part of the
paper we explore the connection between Buchstaber invariants and
bigraded Betti numbers. This requires some basic homological
algebra and the construction of the Taylor resolution of a
Stanley--Reisner ring. Section~\ref{SecBettiBuch} contains all
necessary definitions and the proof of Theorem~\ref{thmBettiBuch}.

\section{Combinatorial approach to Buchstaber invariants}\label{SecApproaches}

\subsection{Characteristic functions}

A subgroup $G\subseteq T^m$ acts freely on a moment-angle complex
$\Z_K$ if and only if $G$ intersects stabilizers of the action
$T^m\curvearrowright \Z_K$ trivially.

\begin{lemma}
Stabilizers of the action $T^m\curvearrowright \Z_K$ are
coordinate subtori $T^I\subseteq T^m$, corresponding to simplices
$I\in K$.
\end{lemma}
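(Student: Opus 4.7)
The plan is a direct application of Definition \ref{definMA}. The $T^m$-action on the ambient space $(D^2)^m$ is coordinatewise rotation, and the factor $S^1\curvearrowright D^2$ has a unique fixed point, namely $0\in\Co$. Hence the stabilizer of any $z=(z_1,\ldots,z_m)\in(D^2)^m$ under the full torus action is the coordinate subtorus $T^{J(z)}\subseteq T^m$, where $J(z)=\{i\in[m]:z_i=0\}$ is the set of vanishing coordinates. So the problem reduces to identifying which subsets $J\subseteq[m]$ arise as $J(z)$ for $z\in\Z_K$.

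The argument then splits into two directions. First, for any $z\in\Z_K$ I need to check that $J(z)$ is a simplex of $K$. By the definition of $\Z_K$ as a union, $z$ lies in some $(D^2,S^1)^I$ with $I\in K$. For indices $i\notin I$ one has $z_i\in S^1$, so $z_i\neq 0$; therefore $J(z)\subseteq I$. Since $K$ is closed under taking subsets, $J(z)\in K$, and the stabilizer of $z$ equals $T^{J(z)}$, which is of the claimed form. Conversely, to see that every $T^I$ with $I\in K$ actually occurs as a stabilizer, I would exhibit the explicit point $z$ given by $z_i=0$ for $i\in I$ and $z_i=1$ for $i\in[m]\setminus I$; this lies in $(D^2,S^1)^I\subseteq\Z_K$ and has $J(z)=I$, so its stabilizer is precisely $T^I$.

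The argument is short and poses no substantive obstacle. The only point worth highlighting is that the subset-closure axiom of a simplicial complex is exactly what guarantees $J(z)\in K$, rather than merely $J(z)\subseteq I$ for some $I\in K$; without this axiom, stabilizers would still be coordinate subtori, but they would not in general correspond to faces of $K$.
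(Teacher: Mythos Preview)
Your argument is correct and, for the direction that every $T^I$ with $I\in K$ occurs as a stabilizer, it exhibits exactly the same point $\{0\}^I\times\{1\}^{[m]\setminus I}$ that the paper uses. The paper's proof consists of that single sentence and leaves the converse direction (that every stabilizer is a coordinate subtorus $T^J$ with $J\in K$) implicit; you have supplied it explicitly, which is a welcome completion rather than a different approach.
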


\begin{proof}
The subgroup $T^I$ preserves the point
$\{0\}^I\times\{1\}^{[m]\setminus I}\in(D^2,S^1)^I\subseteq \Z_K$.
\end{proof}

In this section we suppose for simplicity that $K$ does not have
ghost vertices. In other words, $\{i\}\in K$ for any $i\in[m]$.
Let $G\subset T^m$ be a toric subgroup of rank $s$ acting freely
on $\Z_K$. Consider the quotient map $\phi\colon T^m\to T^m/G$,
and fix an arbitrary coordinate representation $T^m/G\cong T^r$,
where $r=m-s$. We get a map $\phi\colon T^m\to T^r$ such that the
restriction $\phi|_{T^I}$ to any stabilizer subgroup is injective.
For each vertex $i\in[m]$ consider an $i$-th coordinate subgroup
$T^{\{i\}}\subset T^m$. Since $\{i\}\in K$, the subgroup
$\phi(T^{\{i\}})\subset T^r$ is $1$-dimensional, therefore
$\phi(T^{\{i\}}) =
(t^{\lambda_i^1},t^{\lambda_i^2},\ldots,t^{\lambda_i^r})$, where
$t\in T^1$ and $(\lambda_i^1,\lambda_i^2,\ldots,\lambda_i^r)\in
\Zo^r$. Define a map: $\Lambda\colon [m]\to\Zo^r$,
$\Lambda(i)=(\lambda_i^1,\lambda_i^2,\ldots,\lambda_i^r)$, called
\emph{characteristic map} (corresponding to the subgroup
$G\subseteq T^m$). Since $\phi|_{T^I}$ is injective the
characteristic map satisfies the condition:
\begin{equation}\label{eqStarCond}
\parbox[c]{10cm}{\center{If $I=\{i_1,\ldots,i_k\}\in K$, \\ then
$\Lambda(i_1),\ldots, \Lambda(i_k)$ form a part of a basis of the
lattice $\Zo^r$.}}\tag{$*$}
\end{equation}

Vice a versa any map $\Lambda\colon[m]\to \Zo^r$ satisfying
\eqref{eqStarCond} corresponds to some toric subgroup $G\subset
T^m$ of rank $s=m-r$ acting freely on $\Z_K$.

The case of real moment-angle complexes is similar. Each subgroup
$G\subset \Zt^m$ of rank $s$ acting freely on $\Zr_K$ determines a
map $\Lambda_{\Ro}\colon[m]\to \Zt^r$ with $r=m-s$. This map
satisfies the condition
\begin{equation}\label{eqStarCondR}
\parbox[c]{10cm}{\center{ If $I=\{i_1,\ldots,i_k\}\in K$, \\ then
$\Lambda(i_1),\ldots, \Lambda(i_k)$ are linearly independent in
$\Zt^r$.}}\tag{$*_\Ro$}
\end{equation}

These considerations prove the following statement.

\begin{stm}[I.Izmestiev \cite{Izm2}]\label{stmIzmestCrit}
Let $r(K)$ denote the minimal integer $r$ for which there exists a
map $[m]\to\Zo^r$ satisfying \eqref{eqStarCond}. Let $\rr(K)$
denote the minimal integer $r$ for which there exists a map
$[m]\to\Zt^r$ satisfying \eqref{eqStarCondR}. Then $s(K)=m-r(K)$
and $\sr(K)=m-\rr(K)$.
\end{stm}

\begin{rem} Note that actually there is no 1-to-1 correspondence
between freely acting subgroups and characteristic functions. The
first reason is a choice of an isomorphism $T^m/G\cong T^r$ which
was arbitrary. The second reason is that characteristic function
was defined only up to sign. Integral vectors
$(\lambda_i^1,\lambda_i^2,\ldots,\lambda_i^r)$ and
$-(\lambda_i^1,\lambda_i^2,\ldots,\lambda_i^r)$ determine the same
1-dimensional toric subgroup.
\end{rem}

\subsection{Generalized chromatic invariants}

Let $K$ and $L$ be simplicial complexes on sets $V(K)$ and $V(L)$,
possibly infinite. A map $f\colon V(K)\to V(L)$ is called a
simplicial map (or a map of simplicial complexes) if $I\in K$
implies $f(I)\in L$. For a simplicial map we write $f\colon K\to
L$. A map $f\colon K\to L$ is called non-degenerate if
$|f(I)|=|I|$ for each simplex $I\in K$. The following general
definition is due to R.\,\v{Z}ivaljevi\'{c} \cite[def. 4.11]{Ziv}.

\begin{defin}[Generalized chromatic
invariant]\label{definGenColoring} Let $\F=\{T_\alpha\mid\alpha\in
A\}$ be a family of ``test'' simplicial complexes and let
$\wt\colon A\to \Ro$ be a real-valued function. A
$T_\alpha$-coloring of $K$ is just a non-degenerate simplicial map
$f\colon K\to T_\alpha$ and $\gamma_{(\F,\wt)}$, the
$(\F,\wt)$-chromatic number of $K$, is defined as the infimum of
all weights over all $T_{\alpha}$-colorings,
\begin{equation}\label{eqGenChromNumber}
\gamma_{(\F,\wt)}(K) \eqd \inf\{\wt(\alpha)\mid \mbox{ there
exists a } T_\alpha\mbox{-coloring of } K\}
\end{equation}
If there are no colorings at all, set
$\gamma_{(\F,\wt)}(K)\eqd+\infty$.
\end{defin}

\begin{ex}\label{exDeltaFamily} Let
$\F_{\Delta}=\{\Delta_{[n]}\mid n\in\Noo\}$ be the family of
simplices weighted by numbers of vertices $\wt_{\Delta}(n)\eqd n$.
The $\F_{\Delta}$-coloring is a non-degenerate simplicial map
$f\colon K\to \Delta_{[n]}$. This is just a map $f\colon V(K)\to
[n]$ such that $f(i)\neq f(j)$ for $\{i,j\}\in K$. Thus, $f$ is a
coloring in classical sense and
$\gamma_{(\F_{\Delta},\wt_{\Delta})}(K) = \gamma(K)$ --- the
ordinary chromatic number.
\end{ex}

\begin{ex}\label{exDimenFamily} Consider the complex
$\Delta_{\infty}^{(n)}$ which has infinite countable set $\Omega$
of vertices and simplices --- all subsets $I\subset \Omega$ with
$|I|\leqslant n+1$. Consider the family
$\F_d=\{\Delta_{\infty}^{(n)}\mid n\in\Noo\}$ weighted by
$\wt_d(n)\eqd n$. Then, obviously,
$\gamma_{(\F_d,\wt_d)}(K)=\dim(K)$.
\end{ex}

\begin{ex} Many classical and new invariants in graph theory are
generalized chromatic invariants. These include fractional and
circular chromatic numbers \cite{Koz}, orthogonal colorings
\cite{Orth}, quantum chromatic number \cite{Quant}.
\end{ex}

\begin{ex}\label{exUnivCpxesFamily} An integral vector $v\in \Zo^n$,
$v\neq 0$ is called \emph{primitive} if $v$ is not divisible by
natural numbers other than $1$. A collection of integral vectors
$I=\{v_1,\ldots,v_k\}\subset\Zo^n$ is called \emph{unimodular} if
$I$ is a part of some basis of a lattice $\Zo^n$. Clearly, any
vector in a unimodular collection is primitive. A subcollection of
a unimodular collection is unimodular.

Consider the simplicial complex $U_n$ in which: (1) vertices are
primitive vectors of $\Zo^n$; (2) simplices are unimodular
collections of vectors. Obviously, maximal simplices are bases of
the lattice $\Zo^n$, so $\dim U_n=n-1$. Define the test family
$\F_{U}=\{U_n\mid n\in \Noo\}$ weighted by $\wt_U(n)\eqd n$. Then
an $(\F_{U},\wt_U)$-coloring of a complex $K$ is exactly the map
$\Lambda\colon[m]\to \Z^n$, which satisfies
\eqref{eqStarCond}-condition. Therefore, the generalized chromatic
invariant $\gamma_{(\F_{U},\wt_U)}(K)$ is exactly $r(K)=m-s(K)$.

Similarly, define $\Ur_n$ as a simplicial complex on the set
$\Zt^n\setminus\{0\}$ in which $I$ is a simplex if $I$ is a set of
binary vectors linearly independent over $\Zt$. Clearly, $\dim
\Ur_n = n-1$. Define the test family $\F_{U\Ro}=\{\Ur_n\mid n\in
\Noo\}$ weighted by $\wt_{U\Ro}(n)\eqd n$. Then
$\gamma_{(\F_{U\Ro},\wt_{U\Ro})}(K)=\rr(K)=m-\sr(K)$.
\end{ex}

We can always assume that test families satisfy
$\gamma_{(\F,\wt)}(T_\alpha)=\wt(T_\alpha)$ in Definition
\ref{definGenColoring}. This holds for the families described
above.

Generalized chromatic invariants share a common property. If there
exists a non-degenerate map $g\colon K\to L$, then
$\gamma_{(\F,\wt)}(K)\leqslant\gamma_{(\F,\wt)}(L)$. This fact
follows easily from the definition: if $f\colon L\to \F_\alpha$ is
an $\F_\alpha$-coloring of $L$, then $f\circ g\colon K\to L\to
\F_\alpha$ is an $\F_\alpha$-coloring of $K$ with the same weight.
For Buchstaber invariants (Example \ref{exUnivCpxesFamily}) this
observation gives
$$
s(K)\geqslant s(L)-m_L+m_K,\qquad \sr(K)\geqslant \sr(L)-m_L+m_K,
$$
where $m_K$, $m_L$ are the numbers of vertices of $K$ and $L$.
This fact was first pointed out by N.Erokhovets in \cite{Er}.

On the other hand, the aforementioned monotonicity property is in
general not substantial due of the following ``general nonsense''
argument.

\begin{claim}
Let $a(\cdot)$ be an invariant of simplicial complexes taking
values in $\Ro$ and such that $a(K)\leqslant a(L)$ if there exists
a non-degenerate map $g\colon K\to L$. Then $a(\cdot)$ is a
generalized chromatic invariant.
\end{claim}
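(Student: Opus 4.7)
The plan is to use a tautological construction: take as the test family $\F$ the collection of all simplicial complexes, with one representative per isomorphism class indexed by some set $A$, and define the weight of a test complex $L$ by $\wt(L)\eqd a(L)$. With these data, an $(\F,\wt)$-coloring of $K$ in the sense of Definition \ref{definGenColoring} is precisely a non-degenerate simplicial map $K\to L$ for some $L\in\F$. I would then show that $\gamma_{(\F,\wt)}(K)=a(K)$ by proving two inequalities.

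First I would establish the upper bound $\gamma_{(\F,\wt)}(K)\leqslant a(K)$. This is immediate from the definition: the identity $\id_K\colon K\to K$ is a non-degenerate simplicial map, so $K$ itself is a valid test target in $\F$, and it contributes the value $\wt(K)=a(K)$ to the infimum in \eqref{eqGenChromNumber}.

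Next I would establish the matching lower bound $a(K)\leqslant\gamma_{(\F,\wt)}(K)$. Given any admissible coloring $g\colon K\to L$, the hypothesis on $a$ forces $a(K)\leqslant a(L)=\wt(L)$. Taking the infimum of $\wt(L)$ over all $L$ admitting such a $g$ yields $a(K)\leqslant\gamma_{(\F,\wt)}(K)$, and combined with the upper bound we conclude $\gamma_{(\F,\wt)}(K)=a(K)$.

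The only point requiring any care is set-theoretic: the collection of all simplicial complexes is a proper class rather than a set, while Definition \ref{definGenColoring} specifies $\F$ by an index set $A$. This is easily patched by restricting $\F$ to a set of isomorphism representatives on vertex sets of cardinality bounded by, say, $|V(K)|+\aleph_0$ — this set contains $K$ itself, so both inequalities above go through verbatim. No genuine obstacle arises; the entire argument collapses to the observation that the identity map is always an admissible coloring.
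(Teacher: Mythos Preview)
Your proposal is correct and follows exactly the same tautological approach as the paper: take the family of all simplicial complexes weighted by $a$ itself, and observe that the identity map realizes the infimum. Your treatment is in fact more detailed than the paper's one-line proof, which simply names this family and dismisses the set-theoretic issue with the phrase ``good universe''; your cardinality-bound patch is in the same spirit, though strictly speaking the bound should be fixed independently of $K$ so that a single family $(\F,\wt)$ works uniformly.
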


\begin{proof}
Just take the family of all simplicial complexes weighted by
$a(\cdot)$ itself. Of course, we suppose that all complexes under
consideration belong to some ``good universe'' to avoid
set-theoretical problems.
\end{proof}

Let us describe the relation between different generalized
chromatic invariants. Let $(\F_1,\wt_1)$ and $(\F_2,\wt_2)$ be
weighted test families. We say that there is a morphism
$\Psi~\colon(\F_1,\wt_1) \to (\F_2,\wt_2)$ if for each complex
$T\in\F_1$ there exists a non-degenerate simplicial map from $T$
to some $S\in \F_2$ with $\wt_2(S)\leqslant\wt_1(T)$.

\begin{lemma}\label{lemmaMorphismOfFamilies}
If there is a morphism from $(\F_1,\wt_1)$ to $(\F_2,\wt_2)$ then
$\gamma_{(\F_2,\wt_2)}(K)\leqslant\gamma_{(\F_1,\wt_1)}(K)$ for
any $K$.
\end{lemma}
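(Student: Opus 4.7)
The plan is to show the inequality by transporting colorings of $K$ through the morphism. First, observe that if $\gamma_{(\F_1,\wt_1)}(K) = +\infty$, i.e.\ $K$ admits no $\F_1$-coloring, then the inequality holds trivially. Otherwise, fix an arbitrary non-degenerate simplicial map $f \colon K \to T$ with $T \in \F_1$, so that $\wt_1(T)$ is one of the weights over which the infimum defining $\gamma_{(\F_1,\wt_1)}(K)$ is taken.

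Next, apply the morphism $\Psi$ to $T$: by definition there exists some $S \in \F_2$ and a non-degenerate simplicial map $\psi_T \colon T \to S$ with $\wt_2(S) \leqslant \wt_1(T)$. Consider the composition $\psi_T \circ f \colon K \to S$. Composition of simplicial maps is simplicial. The key small step is to verify that $\psi_T \circ f$ is non-degenerate: for any simplex $I \in K$ we have $|f(I)| = |I|$ because $f$ is non-degenerate, and then $|\psi_T(f(I))| = |f(I)| = |I|$ because $\psi_T$ is non-degenerate on the simplex $f(I) \in T$. Hence $\psi_T\circ f$ is a valid $S$-coloring of $K$, witnessing that $\gamma_{(\F_2,\wt_2)}(K) \leqslant \wt_2(S) \leqslant \wt_1(T)$.

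Finally, taking the infimum of the right-hand side over all $T \in \F_1$ admitting a non-degenerate map $K \to T$ yields $\gamma_{(\F_2,\wt_2)}(K) \leqslant \gamma_{(\F_1,\wt_1)}(K)$, as required. The argument is essentially formal; the only point that requires any thought is the compatibility of non-degeneracy with composition, which is immediate from the definition $|f(I)| = |I|$. I do not anticipate any real obstacle, since the statement is the precise categorical reflection of the monotonicity property noted just before the lemma, specialised to the two-step chain $K \to T \to S$.
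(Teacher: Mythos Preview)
Your proof is correct and is precisely the unpacking of what the paper means by ``The proof is immediate.'' The only nontrivial verification---that non-degeneracy is preserved under composition---is exactly the point you highlight, and the paper takes this for granted.
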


The proof is immediate.

\begin{lemma}\label{lemmaSeriesOfMorph}
There is a series of morphisms:
$$
(\F_\Delta,\wt_{\Delta})\to(\F_U,\wt_U)\to(\F_{U\Ro},\wt_{U\Ro})\to(\F_d,\wt_d+1)
$$
for the families defined in Examples
\ref{exDeltaFamily},\ref{exDimenFamily} and
\ref{exUnivCpxesFamily}
\end{lemma}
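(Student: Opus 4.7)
The plan is to exhibit each of the three morphisms separately by writing down a natural non-degenerate simplicial map on every member of the source family and checking that weights are preserved.

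For the first arrow $(\F_\Delta,\wt_\Delta)\to(\F_U,\wt_U)$, I would map $\Delta_{[n]}$ to $U_n$ by sending its vertices to the standard basis $e_1,\ldots,e_n$ of $\Zo^n$. Any subset of $\{e_1,\ldots,e_n\}$ is part of the standard basis, hence unimodular, so the map is simplicial, and it is injective on vertices, hence non-degenerate. The weights coincide: $\wt_U(n)=n=\wt_\Delta(n)$.

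For the second arrow $(\F_U,\wt_U)\to(\F_{U\Ro},\wt_{U\Ro})$, the natural candidate is reduction modulo~$2$, $\Zo^n\to\Zt^n$. A primitive integer vector has at least one odd coordinate, so its image is nonzero; this gives a well-defined map on vertex sets $V(U_n)\to V(\Ur_n)$. If $\{v_1,\ldots,v_k\}$ is unimodular, complete it to an integral basis of $\Zo^n$; the resulting matrix has determinant $\pm1$, so its reduction mod~$2$ is invertible over $\Zt$, which forces the reduced vectors to be $\Zt$-linearly independent. Hence we obtain a non-degenerate simplicial map $U_n\to\Ur_n$ with matching weight $n$.

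For the third arrow $(\F_{U\Ro},\wt_{U\Ro})\to(\F_d,\wt_d+1)$, recall that $\dim\Ur_n=n-1$ and $V(\Ur_n)=\Zt^n\setminus\{0\}$ is finite, while $V(\Delta_\infty^{(n-1)})$ is countably infinite; so we can fix any injection of vertex sets. Every simplex of $\Ur_n$ has at most $n$ elements and is therefore automatically a simplex of $\Delta_\infty^{(n-1)}$, and injectivity makes the map non-degenerate. The weights match: $(\wt_d+1)(n-1)=n=\wt_{U\Ro}(n)$.

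No step looks genuinely hard; the only non-trivial points are that primitive integer vectors stay nonzero mod~$2$ and that integer matrices with determinant $\pm1$ remain invertible mod~$2$, both elementary. The ``main obstacle'', such as it is, is purely notational: one must keep track of the shift by $+1$ in the last family and of the fact that the arrows go in the direction of decreasing generalized chromatic number via Lemma~\ref{lemmaMorphismOfFamilies}, which here recovers the familiar chain of inequalities $\dim K+1\leqslant\rr(K)\leqslant r(K)\leqslant\gamma(K)$.
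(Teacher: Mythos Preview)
Your proposal is correct and follows exactly the same approach as the paper: for each $n$ you use the standard basis embedding $\Delta_{[n]}\to U_n$, the mod~$2$ reduction $U_n\to\Ur_n$, and an injection $\Ur_n\hookrightarrow\Delta_\infty^{(n-1)}$. The paper's proof is essentially a terser version of yours, omitting the elementary verifications (primitivity implies nonzero mod~$2$, unimodularity implies $\Zt$-independence) that you spell out.
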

\begin{proof}
Indeed, for each $n\in \Noo$ we have the following. (1) A
non-degenerate map $\Delta_{[n]}~\to~U_n$, sending $[n]$ to a
basis of the lattice $\Zo^n$. (2) A non-degenerate map
$p~\colon~U_n~\to~\Ur_n$, which reduces each primitive vector (a
vertex of $U_n$) modulo $2$. The map $p$, obviously, sends
unimodular collections from $\Zo^n$ to linearly independent sets
in $\Zt^n$. (3) A non-degenerate inclusion map
$\Ur_n~\to~\Delta_{\infty}^{n-1}$.
\end{proof}

From Lemmas \ref{lemmaMorphismOfFamilies} and
\ref{lemmaSeriesOfMorph} follows
$$\dim K+1\leqslant \rr(K)\leqslant r(K)\leqslant \gamma(K),$$
for any $K$. Equivalently:
\begin{equation}\label{eqGenEstimateBuch}
m-\gamma(K)\leqslant s(K)\leqslant \sr(K)\leqslant m-\dim K-1.
\end{equation}

The estimation of $s(K)$ by ordinary chromatic number was first
proved in \cite{Izm1}. The inequality between real and ordinary
Buchstaber invariant can be understood topologically as
well~\cite{FM}.

We call two test families equivalent, $(\F_1,\wt_1)\sim
(\F_2,\wt_2)$, if there are morphisms in both directions:
$\Psi\colon (\F_1,\wt_1)\to (\F_2,\wt_2)$ and $\Phi\colon
(\F_2,\wt_2)\to (\F_1,\wt_1)$. Equivalent families define equal
generalized chromatic invariants by Lemma
\ref{lemmaMorphismOfFamilies}. Therefore to prove that certain
generalized chromatic invariants are different we need to prove
that their test families are not equivalent.

In particular, to prove that $r(\cdot)$ and $\rr(\cdot)$ are
different invariants, it is sufficient to show that for some $n\in
\Noo$ there is no non-degenerate map from $\Ur_n$ to $U_n$. In
other words, we should prove that $r(\Ur_n)>n=\rr(\Ur_n)$ for some
$n\in \Noo$. This consideration is summarized as follows:

\begin{claim}
If there exists a simplicial complex $K$ such that
$s(K)\neq\sr(K)$ then such complex can be found among $\{\Ur_n\mid
n\in \Noo\}$.
\end{claim}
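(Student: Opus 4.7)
The plan is to argue by contrapositive through the machinery of test families developed above. Translating via Statement \ref{stmIzmestCrit}, the condition $s(K)\neq \sr(K)$ is equivalent to $r(K)>\rr(K)$, so it suffices to exhibit some $n\in\Noo$ with $r(\Ur_n)>\rr(\Ur_n)$, under the assumption that such a strict inequality holds for at least one complex.

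First I would compute $\rr(\Ur_n)$ directly: the identity map $\Ur_n\to\Ur_n$ is a non-degenerate simplicial map of weight $n$, giving $\rr(\Ur_n)\leqslant n$, while the general bound \eqref{eqGenEstimateBuch} together with $\dim\Ur_n=n-1$ forces $\rr(\Ur_n)\geqslant n$. Hence $\rr(\Ur_n)=n$, and the question reduces to whether there exists $n$ with $r(\Ur_n)>n$, i.e.\ no non-degenerate simplicial map $\Ur_n\to U_n$.

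Next I would assume for contradiction that for \emph{every} $n\in\Noo$ a non-degenerate simplicial map $\Psi_n\colon\Ur_n\to U_n$ does exist. Collecting these $\Psi_n$ furnishes a morphism of weighted test families $\Psi\colon(\F_{U\Ro},\wt_{U\Ro})\to(\F_U,\wt_U)$ in the sense defined before Lemma~\ref{lemmaMorphismOfFamilies}, because $\wt_U(U_n)=n=\wt_{U\Ro}(\Ur_n)$. Combined with the morphism in the opposite direction supplied by Lemma~\ref{lemmaSeriesOfMorph}, this makes the two test families equivalent, and applying Lemma~\ref{lemmaMorphismOfFamilies} in both directions gives $r(K)=\rr(K)$ for \emph{every} simplicial complex~$K$, contradicting the hypothesis.

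The conceptual content of the argument is thus concentrated in the identification $\rr(\Ur_n)=n$ and in recognizing that the existence of the reverse morphism of families is equivalent to a collection of non-degenerate maps $\Ur_n\to U_n$; once these are in place everything follows formally from the lemmas on morphisms of test families, so there is no genuine obstacle to overcome and no enumeration is required at this stage. (The hard part is, of course, actually \emph{producing} such an $n$, which is the content of Theorem~\ref{thmRealComplex} and will be handled in Section~\ref{SecRealComplex}; the present claim only localizes the search.)
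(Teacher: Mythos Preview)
Your argument is correct and follows precisely the route the paper intends: the Claim in the paper is presented as a summary of the preceding discussion on equivalence of test families, and you have simply written out that discussion in full contrapositive form, including the explicit verification that $\rr(\Ur_n)=n$ via the identity map and the dimension bound. There is no substantive difference between your proof and the paper's.
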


We start to check complexes $\Ur_n$ for small values of $n$. For a
test family $(\F,\wt)$ define $(\F^{(\ell)},\wt)$ as a family of
$\ell$-skeletons of members of $\F$.

\begin{prop}\label{propDim012}
If $\dim K=0,1,2$, then $s(K)=\sr(K)$. In particular,
$s(\Ur_n)=\sr(\Ur_n)$ for $i=1,2,3$.
\end{prop}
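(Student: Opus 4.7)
The plan is to prove $s(K) \geqslant \sr(K)$, since the opposite inequality is already supplied by \eqref{eqGenEstimateBuch}. Via Statement \ref{stmIzmestCrit} this reduces to showing $r(K) \leqslant \rr(K)$. Concretely, I fix $r = \rr(K)$ together with a $\Zt$-characteristic function $\Lambda_{\Ro} \colon [m] \to \Zt^r$ satisfying \eqref{eqStarCondR}, and lift it coordinatewise to the canonical $\{0,1\}$-representatives, producing a map $\Lambda \colon [m] \to \Zo^r$. All that remains is to check that $\Lambda$ satisfies \eqref{eqStarCond}.

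Fix a simplex $I = \{i_1, \ldots, i_k\} \in K$ and let $M$ be the $r \times k$ integer matrix whose columns are $\Lambda(i_1), \ldots, \Lambda(i_k)$. By the structure theorem for modules over a PID, the columns of $M$ form part of a basis of $\Zo^r$ if and only if the gcd of the $k \times k$ minors of $M$ equals $1$. Now \eqref{eqStarCondR} says that the mod-$2$ reduction of $M$ has rank $k$ over $\Zt$, so some $k \times k$ minor of $M$ is odd. Here the hypothesis $\dim K \leqslant 2$ becomes essential: we have $k \leqslant \dim K + 1 \leqslant 3$, and every $k \times k$ $\{0,1\}$-matrix has determinant of absolute value at most $1$ for $k = 1, 2$ and at most $2$ for $k = 3$. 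Consequently any odd such minor must equal $\pm 1$, so the gcd of the $k \times k$ minors of $M$ is $1$, and \eqref{eqStarCond} holds.

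The single non-formal ingredient is the Hadamard-type bound $|\det N| \leqslant 2$ for every $3 \times 3$ $\{0,1\}$-matrix $N$, which I would verify by a quick Leibniz argument: if $|\det N| = 3$, then all three even-permutation monomials of entries must equal $1$ and all three odd-permutation monomials must equal $0$; but the former condition forces every entry of $N$ to be $1$, in which case the odd-permutation monomials are also $1$, a contradiction. The ``in particular'' assertion is then automatic, since $\dim \Ur_n = n - 1 \leqslant 2$ for $n \leqslant 3$. I would also note that the dimension hypothesis is sharp: $4 \times 4$ $\{0,1\}$-matrices can have determinant $3$, and this is precisely the phenomenon that Theorem \ref{thmRealComplex} exploits in dimension $3$.
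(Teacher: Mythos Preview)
Your proof is correct and follows essentially the same approach as the paper: both arguments lift a $\Zt$-characteristic function to $\Zo^r$ via the canonical $\{0,1\}$-representatives and then verify that any at-most-three such vectors are unimodular over $\Zo$. The paper packages this as an equivalence of test families $(\F_U^{(2)},\wt_U)\sim(\F_{U\Ro}^{(2)},\wt_{U\Ro})$ and leaves the unimodularity check as ``easily shown,'' whereas you supply that check explicitly via the gcd-of-minors criterion and the bound $|\det N|\leqslant 2$ for $3\times 3$ $\{0,1\}$-matrices; otherwise the arguments coincide.
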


\begin{proof}
For complexes $K$ of dimension $0,1,2$ a non-degenerate map from
$K$ to $T$ is the same as a non-degenerate map from $K$ to
$T^{(2)}$. Therefore, $\gamma_{(\F,\wt)}(K) =
\gamma_{(\F^{(2)},\wt)}(K)$.

We prove that $(\F_U^{(2)},\wt_U)\sim
(\F_{U\Ro}^{(2)},\wt_{U\Ro})$. The proof exploits a trick invented
in \cite{ErArx,ErNewBig}. The modulo 2 reduction map
$p~\colon~U_n^{(2)}~\to~\Ur_n^{(2)}$ is already constructed. Let
us construct a non-degenerate map
$q~\colon~\Ur_n^{(2)}~\to~U_n^{(2)}$. The vertex $v$ of $\Ur_n$ is
a vector in $\Zt^n\setminus\{0\}$. It can be written as an array
of $0$ and $1$. Consider $q(v)\in\Zo^n$
--- the same array of $0$ and $1$ as an integral vector. It is easily
shown that if $I\subset\Zt^n$ is a set of at most $3$ linearly
independent vectors, then $\{q(v)\mid v\in I\}$ is unimodular in
$\Zo^n$. Thus $q$ is a non-degenerate map from $\Ur_n^{(2)}$ to
$U_n^{(2)}$.

Finally, $r(K) = \gamma_{(\F_U,\wt_U)}(K) =
\gamma_{(\F_U^{(2)},\wt_U)}(K) =
\gamma_{(\F_{U\Ro}^{(2)},\wt_{U\Ro})}(K) =
\gamma_{(\F_{U\Ro},\wt_{U\Ro})}(K) = \rr(K)$. The proposition now
follows from Statement \ref{stmIzmestCrit}.
\end{proof}

More can be said in the case $\dim K\leqslant 1$.

\begin{prop}\label{propDim01}
If $\dim K \leqslant 1$ then $s(K)=\sr(K)=m-[\log_2\gamma(K)]-1$.
Here $m$ is the number of vertices of $K$, $\gamma(K)$ ---
chromatic number, and $[\,\cdot\,]$ denotes an integral part.
\end{prop}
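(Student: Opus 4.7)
The plan is to combine Proposition~\ref{propDim012} with Statement~\ref{stmIzmestCrit} and then reinterpret the condition $(*_\Ro)$ for one-dimensional complexes as a classical vertex coloring.

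First, since $\dim K\leqslant 1\leqslant 2$, Proposition~\ref{propDim012} gives $s(K)=\sr(K)$, so it suffices to compute $\sr(K)$. By Statement~\ref{stmIzmestCrit} we have $\sr(K)=m-\rr(K)$, where $\rr(K)$ is the minimal $r$ admitting a map $\Lambda\colon[m]\to\Zt^r$ satisfying \eqref{eqStarCondR}. The formula to prove is therefore equivalent to $\rr(K)=[\log_2\gamma(K)]+1$.

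Next I would unravel what condition $(*_\Ro)$ means when $\dim K\leqslant 1$. The simplices of $K$ are singletons and edges, so \eqref{eqStarCondR} asks that (a) $\Lambda(i)\neq 0$ for every vertex $i$, and (b) for every edge $\{i,j\}\in K$, the vectors $\Lambda(i),\Lambda(j)\in\Zt^r$ are linearly independent. Over the two-element field the only potential dependence between two nonzero vectors is equality, so (a) and (b) together say exactly that $\Lambda$ is a proper vertex coloring of $K$ using the color set $\Zt^r\setminus\{0\}$, which has cardinality $2^r-1$. Hence $\rr(K)$ is the smallest $r$ with $\gamma(K)\leqslant 2^r-1$, i.e. $2^r\geqslant\gamma(K)+1$.

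Finally a routine verification shows that this smallest $r$ equals $[\log_2\gamma(K)]+1$. Indeed, writing $c=\gamma(K)$ and choosing $k$ with $2^{k-1}\leqslant c<2^k$, one has $[\log_2 c]+1=k$, while the inequality $2^r\geqslant c+1$ is first satisfied at $r=k$ (since $2^{k-1}<c+1\leqslant 2^k$). Combining all three steps yields
\[
\sr(K)=m-\rr(K)=m-[\log_2\gamma(K)]-1,
\]
and together with $s(K)=\sr(K)$ this proves the proposition. No real obstacle arises; the only point to handle carefully is the equivalence between condition $(*_\Ro)$ on edges and ordinary proper coloring, which is a peculiarity of the field $\Zt$.
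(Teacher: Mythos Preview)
Your proof is correct and follows essentially the same route as the paper. The paper phrases the key observation in the language of test families---noting that $\Ur_n^{(1)}$ is the complete graph on $2^n-1$ vertices, so $(\F_{U\Ro}^{(1)},\wt_{U\Ro})$ is equivalent to a subfamily of $(\F_{\Delta}^{(1)},\wt_{\Delta})$---while you unpack condition $(*_\Ro)$ directly; both amount to the same fact that over $\Zt$ two nonzero vectors are independent iff they are distinct, so an $(*_\Ro)$-map to $\Zt^r$ is a proper coloring by $2^r-1$ colors.
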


\begin{proof}
Note that $\Ur_n^{(1)}\cong \Delta_{[2^n-1]}^{(1)}$, since both
complexes are complete graphs on $2^n~-~1$ vertices. Thus the
family $(\F_{U\Ro}^{(1)},\wt_{U\Ro})$ is equivalent to
$(\{\Delta_{[2^n-1]}^{(1)}\},\wt_{\Delta})$ which is the subfamily
of $(\F_{\Delta}^{(1)},\wt_{\Delta})$. Formula
$\rr(K)=[\log_2\gamma(K)]+1$ follows easily.
\end{proof}

\begin{cor}
For finite $1$-dimensional simplicial complexes (i.e. simple
graphs) the problem to decide, whether $s(K)$ (or $\sr(K)$) is
equal to $m-2$, is NP-complete.
\end{cor}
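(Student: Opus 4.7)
The plan is to reduce the problem to graph $3$-colorability via Proposition~\ref{propDim01}. Since $\dim K \leqslant 1$, the proposition gives $s(K)=\sr(K)=m-[\log_2\gamma(K)]-1$, so the condition $s(K)=m-2$ is equivalent to $[\log_2\gamma(K)]=1$, i.e. $\gamma(K)\in\{2,3\}$. For a graph with at least one edge this just says that $K$ is $3$-colorable; for an edgeless graph $\gamma(K)=1$ and the condition fails (giving $s(K)=m-1$).

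First I would establish membership in NP: a certificate for $s(K)=m-2$ is a proper $3$-coloring of $K$ together with any edge, both of which can be verified in polynomial time; computing $m$ is trivial.

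Next, for NP-hardness I would exhibit a polynomial reduction from the classical $3$-colorability problem (known to be NP-complete). Given an instance $G$ of $3$-\textsc{col}, check in linear time whether $G$ has any edges. If not, output a fixed trivial YES instance of $3$-\textsc{col} (edgeless graphs are always $3$-colorable). Otherwise, output $G$ itself and ask whether $s(G)=m-2$; by the equivalence above, the answer coincides with $3$-colorability of $G$. The same reduction works verbatim for $\sr$ in place of $s$ since the two invariants coincide in dimension $\leqslant 1$.

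There is essentially no ``hard step'' here beyond recognising that Proposition~\ref{propDim01} converts a toric-topological invariant into the chromatic number, after which the NP-completeness is just the folklore NP-completeness of $3$-colorability. The only minor point to be careful about is the boundary case $\gamma(K)=1$ (edgeless $K$), which is why the reduction treats it separately.
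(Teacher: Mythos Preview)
Your proof is correct and follows essentially the same route as the paper: both invoke Proposition~\ref{propDim01} to translate $s(K)=m-2$ into $\gamma(K)\in\{2,3\}$ and then appeal to the NP-completeness of graph $3$-colorability, with the polynomial check for edges/$2$-colorability handling the boundary case. One small slip in your reduction: when $G$ is edgeless you must output a fixed YES instance of the \emph{target} problem (e.g.\ a single edge, for which $s=0=m-2$), not ``a YES instance of $3$-\textsc{col}'', since an edgeless graph is a YES instance of $3$-colorability but a NO instance of the problem $s(K)=m-2$.
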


\begin{proof}
By Proposition~\ref{propDim01}, $\sr(K)=m-2$ if and only if
$\gamma(K)=2$ or $\gamma(K)=3$. $2$-colorability of a graph $K$
can be verified in polynomial time. $3$-colorability of a graph
$K$ is an NP-complete decision problem \cite[A1,GT4 in
Appendix]{GJ}.
\end{proof}

\section{Real and ordinary Buchstaber invariants are different}\label{SecRealComplex}

In this section we prove Theorem~\ref{thmRealComplex}, by showing
that $s(\Ur_4)>4=\sr(\Ur_4)$ for the complex $\Ur_4$ defined in
the previous section. In other words, we prove that there is no
non-degenerate simplicial map from $\Ur_4$ to $U_4$.

Let $e$ denote the nonzero element of $\Zt$ to avoid confusion
with integral unit. Recall the map $p\colon U_4\to \Ur_4$
described in Lemma \ref{lemmaSeriesOfMorph}. This map sends
$(x_1,x_2,x_3,x_4)\in\Zo^4$ to $(x_1, x_2, x_3,
x_4)\mod2\in\Zt^4$.

\begin{lemma}\label{lemNondegThusInj}
Let $f\colon \Ur_4 \to N$ be a non-degenerate map. Then $f$ is an
injective map of vertices.
\end{lemma}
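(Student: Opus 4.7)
The proof should be essentially immediate from the definition of $\Ur_4$ combined with the definition of non-degeneracy. The plan is to observe that the $1$-skeleton of $\Ur_4$ is a \emph{complete} graph on $\Zt^4\setminus\{0\}$, so that any two distinct vertices of $\Ur_4$ span an edge, and then invoke non-degeneracy to conclude.

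More precisely, I would first verify the following basic fact: if $v, w \in \Zt^4\setminus\{0\}$ are two distinct vertices of $\Ur_4$, then $\{v,w\}$ is linearly independent over $\Zt$, hence $\{v,w\} \in \Ur_4$. Indeed, the only nonzero scalar in $\Zt$ is $e$, so any linear dependence $av + bw = 0$ with $(a,b) \neq (0,0)$ forces $a = b = e$, giving $v = w$, a contradiction. This shows that the $1$-skeleton $\Ur_4^{(1)}$ is the complete graph on the $15$-vertex set $\Zt^4\setminus\{0\}$.

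Now suppose for contradiction that $f$ fails to be injective on vertices, i.e.\ $f(v) = f(w)$ for some distinct $v, w \in \Zt^4\setminus\{0\}$. By the preceding paragraph, $I = \{v,w\}$ is a simplex of $\Ur_4$, so non-degeneracy of $f$ requires $|f(I)| = |I| = 2$. But $f(I) = \{f(v), f(w)\} = \{f(v)\}$ has cardinality $1$, a contradiction. Hence $f$ is injective on vertices.

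There is no real obstacle here: the statement is purely a consequence of the combinatorics of $\Ur_4$, and the whole content reduces to the elementary observation that distinct nonzero $\Zt$-vectors are always linearly independent. The lemma is presumably recorded here only because it will be used repeatedly in the enumeration argument of Section~\ref{SecRealComplex} to identify a hypothetical non-degenerate map $\Ur_4 \to U_4$ with an injection on the $15$ vertices.
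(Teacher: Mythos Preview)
Your proof is correct and follows exactly the paper's approach: the paper's proof is the one-line observation that vertices of $\Ur_4$ are pairwise connected, so non-degeneracy forces $|\{f(v_1),f(v_2)\}|=2$. You have simply spelled out in more detail why any two distinct nonzero vectors in $\Zt^4$ are linearly independent, which the paper takes for granted.
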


\begin{proof}
Vertices of $\Ur_4$ are pairwise connected. By non-degeneracy,
$|\{f(v_1),f(v_2)\}|=2$, thus $f(v_1)\neq f(v_2)$.
\end{proof}

\begin{rem} Every non-degenerate map $f\colon \Ur_4 \to N$ is injective
on simplices as well.
\end{rem}

\begin{lemma}\label{lemGenPos}
If there exists a non-degenerate map $\tilde{\nu}\colon \Ur_4\to
U_4$, then there exists a non-degenerate map $\nu\colon \Ur_4\to
U_4$ such that
\begin{equation}\label{eqLiftDefin}
p\circ\nu = \id\colon \Ur_4\to \Ur_4.
\end{equation}
\end{lemma}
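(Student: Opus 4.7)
The plan is to compose $\tilde{\nu}$ with the mod $2$ reduction $p$, recognize the resulting self-map of $\Ur_4$ as induced by a $\Zt$-linear automorphism of $\Zt^4$, and then cancel it off by lifting to $GL(4,\Zo)$. Concretely, I would form $\sigma\eqd p\circ\tilde{\nu}\colon \Ur_4\to\Ur_4$. Since $p$ and $\tilde{\nu}$ are both non-degenerate (the first by Lemma~\ref{lemmaSeriesOfMorph}, the second by hypothesis), so is $\sigma$. By Lemma~\ref{lemNondegThusInj} it is injective on the $15$-element vertex set of $\Ur_4$, hence a bijection; combined with non-degeneracy and the finiteness of the $k$-simplex sets, this forces $\sigma$ to be a simplicial automorphism of $\Ur_4$ (so $\sigma^{-1}$ is simplicial and $\sigma$ preserves non-simplices as well).

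Next I would check that $\sigma$ is the restriction of a $\Zt$-linear map. Extending by $\sigma(0)\eqd 0$, I aim to show $\sigma(v_1+v_2)=\sigma(v_1)+\sigma(v_2)$ for any distinct nonzero $v_1,v_2\in\Zt^4$. The key observation is that $v_1+v_2$ is the \emph{unique} element of $\Zt^4\setminus\{0,v_1,v_2\}$ for which $\{v_1,v_2,v_1+v_2\}$ is linearly dependent: the $\Zt$-span of two distinct nonzero vectors in $\Zt^4$ contains exactly one further nonzero vector. Because $\sigma$ and $\sigma^{-1}$ both preserve simplices and non-simplices of $\Ur_4$, this internal characterization of $v_1+v_2$ survives under $\sigma$, which forces linearity. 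Thus $\sigma$ comes from some $A\in GL(4,\Zt)$.

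To finish, I would use the surjectivity of $GL(4,\Zo)\to GL(4,\Zt)$ (every element of $GL(n,\Zt)$ is a product of elementary transvections, each of which lifts tautologically to an integer transvection), pick a lift $\tilde{A}\in GL(4,\Zo)$ with $\tilde{A}\bmod 2=A$, and set
$$
\nu\eqd \tilde{A}^{-1}\circ\tilde{\nu}\colon\Ur_4\to U_4.
$$
Then $\nu$ is non-degenerate as the composition of $\tilde{\nu}$ with a simplicial automorphism of $U_4$, and a direct computation
$p\circ\nu=A^{-1}\circ p\circ\tilde{\nu}=A^{-1}\circ\sigma=\id$
gives~\eqref{eqLiftDefin}. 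The only conceptually delicate step is the linearity claim for $\sigma$; everything surrounding it is routine bookkeeping.
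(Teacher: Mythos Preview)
Your proposal is correct but takes a genuinely different route from the paper. The paper's argument is essentially a one-liner: setting $q=p\circ\tilde{\nu}$, it notes that $q$ is a non-degenerate self-map of $\Ur_4$, hence (by Lemma~\ref{lemNondegThusInj} and finiteness of the vertex set) a permutation of the $15$ vertices; thus $q^n=\id$ for some $n\geqslant 1$, and $\nu\eqd\tilde{\nu}\circ q^{n-1}$ does the job, since $p\circ\nu=q\circ q^{n-1}=q^n=\id$. Your approach instead identifies $\sigma=p\circ\tilde{\nu}$ as the restriction of a linear automorphism $A\in GL(4,\Zt)$, lifts $A$ to $\tilde{A}\in GL(4,\Zo)$, and corrects on the $U_4$ side via $\nu=\tilde{A}^{-1}\circ\tilde{\nu}$. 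This is valid and yields the pleasant structural byproduct that every simplicial automorphism of $\Ur_4$ is linear, but for the lemma itself the paper's finite-order trick is considerably shorter: it bypasses both the linearity verification and the surjectivity of $GL(4,\Zo)\to GL(4,\Zt)$, neither of which is needed once one realizes that any permutation of a finite set has finite order.
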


\begin{proof}
Consider a map $q = p\circ\tilde{\nu}\colon \Ur_4\to \Ur_4$. The
map $q$ is a non-degenerate simplicial map, therefore, by Lemma
\ref{lemNondegThusInj}, it is injective on vertices of $\Ur_4$.
Thus $q$ defines a permutation on a finite set of vertices
$V(\Ur_4)$. Then $q^n=\id$ for some $n\geqslant 1$. Take $\nu =
\tilde{\nu}\circ q^{n-1}\colon \Ur_4 \to U_4$. Then $\nu$ is a
non-degenerate simplicial map, and $p\circ \nu = q^n = \id$.
\end{proof}

A non-degenerate map $\nu\colon \Ur_4\to U_4$ will be called a
\emph{lift} if it satisfies \eqref{eqLiftDefin}. To prove the
theorem it is sufficient to prove that lifts do not exist.

Suppose the contrary. Let $\Lambda\colon \Ur_4\to U_4$ be a lift.
Vertices of $\Ur_4$ are, by definition, nonzero vectors of
$\Zt^4$. We list them in \eqref{eqGenSchemeU}. Vectors at the
right hand side of \eqref{eqGenSchemeU} are the values of
$\Lambda$. Each vector at the right is a primitive vector in
$\Zo^4$. Since $\Lambda$ is a lift, numbers $a_i$ are odd and
$b_i$ are even.

\begin{align}\label{eqGenSchemeU}
v_1=(e,0,0,0)&\longmapsto (a_1,b_1,b_2,b_3)\notag\\
v_2=(0,e,0,0)&\longmapsto (b_4,a_2,b_5,b_6)\notag\\
v_3=(0,0,e,0)&\longmapsto (b_7,b_8,a_3,b_9)\notag\\
v_4=(0,0,0,e)&\longmapsto (b_{10},b_{11},b_{12},a_4)\notag\\
v_5=(e,e,0,0)&\longmapsto (a_5,a_6,b_{13},b_{14})\notag\\
v_6=(e,0,e,0)&\longmapsto (a_7,b_{15},a_8,b_{16})\notag\\
v_7=(e,0,0,e)&\longmapsto (a_9,b_{17},b_{18},a_{10})\notag\\
v_8=(0,e,e,0)&\longmapsto (b_{19},a_{11},a_{12},b_{20})\\
v_9=(0,e,0,e)&\longmapsto (b_{21},a_{13},b_{22},a_{14})\notag\\
v_{10}=(0,0,e,e)&\longmapsto (b_{23},b_{24},a_{15},a_{16})\notag\\
v_{11}=(0,e,e,e)&\longmapsto (b_{25},a_{17},a_{18},a_{19})\notag\\
v_{12}=(e,0,e,e)&\longmapsto (a_{20},b_{26},a_{21},a_{22})\notag\\
v_{13}=(e,e,0,e)&\longmapsto (a_{23},a_{24},b_{27},a_{25})\notag\\
v_{14}=(e,e,e,0)&\longmapsto (a_{26},a_{27},a_{28},b_{28})\notag\\
v_{15}=(e,e,e,e)&\longmapsto (a_{29},a_{30},a_{31},a_{32})\notag
\end{align}

Values of $\Lambda$ should satisfy \eqref{eqStarCond}-condition.
It is reformulated for this particular case as follows:
\begin{equation}\label{eqStarCondS}
\parbox[c]{12cm}{\center{If $v_{i_1},v_{i_2},v_{i_3},v_{i_4}\in \Zt^4$
satisfy $\det(v_{i_1},v_{i_2},v_{i_3},v_{i_4})=e\in \Zt$,\\ then
$\det(\Lambda(v_{i_1}), \Lambda(v_{i_2}), \Lambda(v_{i_3}),
\Lambda(v_{i_4}))=\pm1\in\Zo$. }}\tag{$*$}
\end{equation}

\begin{lemma}\label{lemmaChangeSign}
Condition \eqref{eqStarCondS} is preserved under the change of
sign of any $\Lambda(v_i)$.
\end{lemma}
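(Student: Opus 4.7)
My plan is to verify directly that both ingredients of condition \eqref{eqStarCondS} are stable under flipping the sign of a single value $\Lambda(v_i)$. The condition has a hypothesis side (computed in $\Zt^4$ on the vectors $v_{i_1},\ldots,v_{i_4}$) and a conclusion side (a unimodularity statement in $\Zo^4$ about $\Lambda(v_{i_1}),\ldots,\Lambda(v_{i_4})$), so I would check each.

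The hypothesis side is immediate: sign changes happen only on the $\Lambda$ values, not on the $v$'s, so the premise $\det(v_{i_1},\ldots,v_{i_4}) = e$ in $\Zt$ is untouched. For the conclusion side I would use multilinearity of the determinant. If the sign of $\Lambda(v_j)$ is flipped for some fixed $j$, then for any 4-tuple $(v_{i_1},\ldots,v_{i_4})$ that contains $v_j$, the determinant $\det(\Lambda(v_{i_1}),\ldots,\Lambda(v_{i_4}))$ is simply negated, while for 4-tuples not containing $v_j$ it is unchanged. In either case, the set $\{+1,-1\}\subset\Zo$ is closed under negation, so the value $\pm 1$ is preserved.

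I would also record, as a side remark, that the sign change keeps $\Lambda$ a lift in the sense of \eqref{eqLiftDefin}: since $-1\equiv 1\pmod 2$, we have $p(-\Lambda(v_i))=p(\Lambda(v_i))=v_i$, so $p\circ\Lambda=\id$ continues to hold. This is not part of the statement but is what makes the sign-change operation a useful move for the remainder of the argument. The only potential subtlety I foresee is bookkeeping: one should ensure no implicit restriction on which $\Lambda(v_i)$ may be flipped, but the argument above applies uniformly to each coordinate, so there is no real obstacle.
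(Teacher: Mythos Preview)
Your proof is correct and is essentially the same as the paper's, which simply records ``This is clear'': you have just spelled out the multilinearity-of-determinant reason why it is clear. The side remark about the lift condition is a nice addition but, as you note, not part of the lemma.
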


This is clear.

\begin{lemma}\label{lemmaWLOGbaseToBase}
Without loss of generality we may assume that
$\Lambda(v_1)=(1,0,0,0)$, $\Lambda(v_2)=(0,1,0,0)$,
$\Lambda(v_3)=(0,0,1,0)$, $\Lambda(v_4)=(0,0,0,1)$.
\end{lemma}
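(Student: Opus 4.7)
The plan is to normalize the given lift $\Lambda$ by post-composing it with a suitable automorphism of the lattice $\Zo^4$, and to check that this normalization stays within the class of lifts. Concretely, I would form the matrix $M\in M_4(\Zo)$ whose columns are $\Lambda(v_1), \Lambda(v_2), \Lambda(v_3), \Lambda(v_4)$ and post-compose $\Lambda$ with $A := M^{-1}$ (acting on $U_4$ by the induced automorphism).

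First I would observe that since $\Lambda$ is a lift, $\Lambda(v_i) \equiv v_i \pmod 2$ for every $i$; in particular, for $i=1,2,3,4$ we have $\Lambda(v_i) \equiv e_i \pmod 2$, so $M \equiv I \pmod 2$. Next, since $\{v_1,v_2,v_3,v_4\}$ is a basis of $\Zt^4$ (its determinant equals $e$), condition \eqref{eqStarCondS} forces $\det M = \pm 1$, so $M \in GL_4(\Zo)$ and $A = M^{-1} \in GL_4(\Zo)$ is well-defined. The key tiny fact is that $M \equiv I \pmod 2$ implies $A \equiv I \pmod 2$, because reduction modulo $2$ is a ring homomorphism $GL_4(\Zo) \to GL_4(\Zt)$.

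Then I would check that $A \circ \Lambda$ has all the required properties. The matrix $A \in GL_4(\Zo)$ permutes primitive vectors and unimodular collections, hence acts as a simplicial automorphism of $U_4$; therefore $A\circ\Lambda \colon \Ur_4 \to U_4$ is a non-degenerate simplicial map. It remains a lift because
\[
p(A\Lambda(v_i)) = (A \bmod 2)\bigl(\Lambda(v_i)\bmod 2\bigr) = I\cdot v_i = v_i,
\]
using $A \equiv I \pmod 2$ and $p\circ\Lambda = \id$. Finally, by the very definition of $A$ one has $A\Lambda(v_i) = M^{-1}Me_i = e_i$ for $i=1,2,3,4$, which is the desired normalization. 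Replacing $\Lambda$ by $A\circ\Lambda$ completes the reduction.

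I do not anticipate any substantive obstacle: the only thing that could have gone wrong is that the normalizing matrix $A$ fails to be an $I \pmod 2$ matrix (which would have destroyed the lift property), and this is automatically prevented by $M \equiv I \pmod 2$. In that sense, the decisive point is recognizing that the existence of a lift synchronizes the reduction of $M$ with the identity matrix, so that a single inverse on the target side simultaneously straightens out the first four images while preserving condition \eqref{eqLiftDefin}.
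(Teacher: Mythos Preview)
Your proof is correct and follows essentially the same approach as the paper: both use that $\{\Lambda(v_1),\ldots,\Lambda(v_4)\}$ is a lattice basis (from condition \eqref{eqStarCondS}) and then perform the change of basis via $M^{-1}$. Your version is in fact more thorough than the paper's one-line argument, since you explicitly verify that the lift property \eqref{eqLiftDefin} survives the change of basis (using $M\equiv I\pmod 2 \Rightarrow M^{-1}\equiv I\pmod 2$), a point the paper leaves implicit but relies on in the subsequent lemmas.
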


\begin{proof}
Indeed, $\det(v_1,v_2,v_3,v_4)=e$, therefore, by
\eqref{eqStarCondS}-condition, $\Lambda(v_i)_{i=1,2,3,4}$ is a
basis of the lattice $\Zo^4$. Expand all vectors $\Lambda(v_i)$ in
this basis.
\end{proof}

\begin{lemma}\label{lemmaPMUnits}
In the notation of \eqref{eqGenSchemeU} $a_i=\pm1$ for each
$i=1,\ldots,32$.
\end{lemma}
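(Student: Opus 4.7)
The plan is to use the normalization from Lemma \ref{lemmaWLOGbaseToBase} to isolate each $a_i$ as a $4\times 4$ integer determinant whose other three columns are standard basis vectors, and then invoke \eqref{eqStarCondS}.

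First I would observe the setup. By Lemma \ref{lemmaWLOGbaseToBase}, $\Lambda(v_j) = e_j$ (the $j$-th standard basis vector of $\Zo^4$) for $j=1,2,3,4$. Each $a_i$ in \eqref{eqGenSchemeU} is, by construction, the value of $\Lambda(v)$ at some coordinate position $k$ where $v\in \Ur_4$ has a nonzero entry; equivalently, $k\in \supp(v)$, which is exactly the reason the corresponding entry was labeled odd rather than even.

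Next I would fix such a pair $(v,k)$ and consider the 4-tuple $v,\, v_{j_1},v_{j_2},v_{j_3}$, where $\{j_1,j_2,j_3\}=\{1,2,3,4\}\setminus\{k\}$. Over $\Zt$ these four vectors form a basis: the span of $v_{j_1},v_{j_2},v_{j_3}$ is exactly the subspace of vectors whose $k$-th coordinate vanishes, while $v$ has $e$ in position $k$. Hence $\det(v,v_{j_1},v_{j_2},v_{j_3}) = e \in \Zt$, so \eqref{eqStarCondS} applies and
\[
\det\bigl(\Lambda(v),\Lambda(v_{j_1}),\Lambda(v_{j_2}),\Lambda(v_{j_3})\bigr) = \pm 1 \in \Zo.
\]
Since $\Lambda(v_{j_r})=e_{j_r}$, three of the four columns of this matrix are standard basis vectors; expanding along them collapses the determinant to the single surviving entry, namely the $k$-th coordinate of $\Lambda(v)$, which is $\pm a_i$. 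Therefore $a_i = \pm 1$.

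The argument is uniform in $i$ and covers all $32$ cases at once; there is no real obstacle once one observes that the normalization in Lemma \ref{lemmaWLOGbaseToBase} reserves the standard basis vectors as ready-made ``test'' columns. The only point to check is the combinatorial fact that the position $k$ indexing any $a_i$ lies in $\supp(v)$, which is immediate from the definition of a lift.
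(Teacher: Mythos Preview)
Your proof is correct and follows essentially the same approach as the paper: you combine the target vector $v$ with three of the normalized standard basis vectors $v_1,\dots,v_4$ so that the resulting $4\times 4$ integer determinant reduces (up to sign) to the single entry $a_i$, and then apply \eqref{eqStarCondS}. The paper displays this as an explicit matrix with $v$ in the last row and the identity in the first three rows, but the content is identical.
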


\begin{proof}
Consider the matrix $A$ over $\Zt$:
$$
A=\begin{pmatrix}
e&0&0&0\\
0&e&0&0\\
0&0&e&0\\
*&*&*&e
\end{pmatrix}\quad\rightsquigarrow\quad B=\begin{pmatrix}
1&0&0&0\\
0&1&0&0\\
0&0&1&0\\
*&*&*&a_i
\end{pmatrix}
$$
Since $\det(A)=e$, \eqref{eqStarCondS}-condition and Lemma
\ref{lemmaWLOGbaseToBase} imply $\det B =\pm1$. Therefore,
$a_i=\pm1$ if $a_i$ stands on the last position. Similar for other
positions of $a_i$.
\end{proof}

In particular, $\Lambda(v_{15}) = \Lambda((e,e,e,e)) =
(\pm1,\pm1,\pm1,\pm1)$.

\begin{lemma}
Without loss of generality we may assume that $\Lambda(v_{15}) =
(1,1,1,1)$.
\end{lemma}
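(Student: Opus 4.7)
The plan is to combine a global diagonal $\pm 1$ change of basis with the individual sign-flipping freedom of Lemma~\ref{lemmaChangeSign} in order to normalize $\Lambda(v_{15})$ while keeping the basis assumption from Lemma~\ref{lemmaWLOGbaseToBase}.

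By Lemma~\ref{lemmaPMUnits} we may write $\Lambda(v_{15})=(\varepsilon_1,\varepsilon_2,\varepsilon_3,\varepsilon_4)$ with each $\varepsilon_i\in\{+1,-1\}$. Let $M=\mathrm{diag}(\varepsilon_1,\varepsilon_2,\varepsilon_3,\varepsilon_4)\in GL_4(\Zo)$, and define $\Lambda'\eqd M\circ\Lambda$. I would first verify that $\Lambda'$ is again a valid lift satisfying \eqref{eqStarCondS}: since $\varepsilon_i\equiv 1\pmod 2$, the matrix $M$ reduces to the identity modulo $2$, so $p\circ\Lambda'=p\circ\Lambda=\id$; and since $\det M=\pm 1$, left-multiplication by $M$ preserves all $4\times 4$ determinants up to sign, so \eqref{eqStarCondS} still holds. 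By construction $\Lambda'(v_{15})=M(\varepsilon_1,\varepsilon_2,\varepsilon_3,\varepsilon_4)^{\top}=(1,1,1,1)$.

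The side effect of applying $M$ is that the basis vectors are rescaled: $\Lambda'(v_i)=Me_i=\varepsilon_i e_i$ for $i=1,2,3,4$. This is precisely where Lemma~\ref{lemmaChangeSign} enters: for each $i\in\{1,2,3,4\}$ with $\varepsilon_i=-1$, I would flip the sign of the single vector $\Lambda'(v_i)$, replacing it by $-\Lambda'(v_i)=e_i$. By Lemma~\ref{lemmaChangeSign} each such individual sign-flip preserves \eqref{eqStarCondS}, and it obviously preserves the lift condition since $e_i$ still reduces to $v_i$ modulo $2$. After these four (or fewer) flips we obtain a characteristic map, still denoted $\Lambda$, satisfying simultaneously $\Lambda(v_i)=e_i$ for $i=1,2,3,4$ and $\Lambda(v_{15})=(1,1,1,1)$.

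I do not foresee a genuine obstacle here; the only thing to be careful about is that the sign-flips at the end are applied one vertex at a time (affecting only the value $\Lambda(v_i)$ at that single vertex) and in particular do not touch $\Lambda(v_{15})$, so the normalization $\Lambda(v_{15})=(1,1,1,1)$ achieved by the $M$-step is not undone.
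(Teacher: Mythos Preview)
Your proof is correct and is essentially identical to the paper's argument: both apply the diagonal change of basis $e_i\mapsto\varepsilon_i e_i$ (your left-multiplication by $M=\mathrm{diag}(\varepsilon_1,\ldots,\varepsilon_4)$) to normalize $\Lambda(v_{15})$ to $(1,1,1,1)$, and then invoke Lemma~\ref{lemmaChangeSign} to restore $\Lambda(v_i)=e_i$ for $i=1,\ldots,4$. Your version is slightly more explicit in checking that the lift condition $p\circ\Lambda'=\id$ is preserved (since $M$ reduces to the identity modulo $2$), which the paper leaves implicit.
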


\begin{proof}
Let $\Lambda(v_{15}) =
(\varepsilon_1,\varepsilon_2,\varepsilon_3,\varepsilon_4)$.
Consider a new basis of the lattice: $e_1'=\varepsilon_1e_1,
e_2'=\varepsilon_2e_2, e_3'=\varepsilon_3e_3,
e_4'=\varepsilon_4e_4$. Vector $\Lambda(v_{15})$ has coordinates
$(1,1,1,1)$ in the basis $\{e_1',e_2',e_3',e_4'\}$. Vectors
$\Lambda(v_1),\Lambda(v_2),\Lambda(v_3),\Lambda(v_4)$ have
coordinates $(\varepsilon_1,0,0,0)$, $(0,\varepsilon_2,0,0)$, etc.
We may change their signs, if necessary, by Lemma
\ref{lemmaChangeSign} and get $\Lambda(v_1)=(1,0,0,0)$,
$\Lambda(v_2)=(0,1,0,0)$, etc. as before.
\end{proof}

To summarize:

\begin{claim}
Without loss of generality, $\Lambda(v_1)=(1,0,0,0)$,
$\Lambda(v_2)=(0,1,0,0)$, $\Lambda(v_3)=(0,0,1,0)$,
$\Lambda(v_4)=(0,0,0,1)$, $\Lambda(v_{15})=(1,1,1,1)$ and
$a_i=\pm1$ for all $i=1,\ldots,32$ in \eqref{eqGenSchemeU}.
\end{claim}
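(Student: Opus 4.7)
This claim is really a summary: the plan is to read it off from the three preceding normalization results — Lemma \ref{lemmaWLOGbaseToBase}, Lemma \ref{lemmaPMUnits}, and the unnamed lemma that sets $\Lambda(v_{15})=(1,1,1,1)$ — which have already done all the work. My only job is to verify that these normalizations can be imposed simultaneously without interfering with one another.

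The natural order is as follows. First I apply Lemma \ref{lemmaWLOGbaseToBase}: since $\det(v_1,v_2,v_3,v_4)=e$ in $\Zt^4$, condition \eqref{eqStarCondS} forces $\Lambda(v_1),\ldots,\Lambda(v_4)$ to be a basis of the lattice $\Zo^4$, and a change of basis brings them to the standard basis $e_1,\ldots,e_4$. Next, for each remaining vertex $v_j$ with $j\geqslant 5$, Lemma \ref{lemmaPMUnits} yields $a_i=\pm 1$ in all $32$ positions of \eqref{eqGenSchemeU}: swapping one of $v_1,\ldots,v_4$ for $v_j$ gives a $4$-tuple of determinant $e$ in $\Zt^4$, which under \eqref{eqStarCondS} must lift to a matrix of determinant $\pm 1$ in $\Zo^4$, pinning down the $a$-entry. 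In particular $\Lambda(v_{15})=(\varepsilon_1,\varepsilon_2,\varepsilon_3,\varepsilon_4)$ with each $\varepsilon_i\in\{\pm 1\}$.

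Finally I invoke the preceding unnamed lemma: rescale by $e_i\mapsto\varepsilon_i e_i$ to turn $\Lambda(v_{15})$ into $(1,1,1,1)$, and then use Lemma \ref{lemmaChangeSign} to flip signs of $\Lambda(v_1),\ldots,\Lambda(v_4)$ back into standard-basis form. The only compatibility point worth checking is that this last step preserves the $a_i=\pm1$ conclusion, which is immediate because it only toggles signs of individual integer coordinates. I do not anticipate any serious obstacle here: all the content has been extracted in the three prior lemmas, and the claim simply records their joint conclusion in a form convenient for the finite case analysis to follow.
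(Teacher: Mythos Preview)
Your proposal is correct and matches the paper's approach exactly: the paper presents this claim as a summary (introduced by ``To summarize:'') of Lemmas \ref{lemmaWLOGbaseToBase}, \ref{lemmaPMUnits}, and the unnamed lemma fixing $\Lambda(v_{15})=(1,1,1,1)$, without giving a separate proof. Your explicit check that the three normalizations are mutually compatible is the only content to verify, and you have handled it correctly.
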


Now we investigate which $b_i$ occur in (\ref{eqGenSchemeU}). A
new portion of notation is needed. From now on the bases of
$\Zt^4$ and $\Zo^4$ are fixed. For
$v=(\alpha_1,\alpha_2,\alpha_3,\alpha_4)\in\Zt^4$ the support is
defined as the set of positions with nonzero entries:
$\supp(v)\eqd\{i~\mid~\alpha_i=e\}$. Consider the standard Hamming
norm $\|v\|\eqd|\supp(v)|$. By definition, $\|v_i\|=2$ for
$5\leqslant i\leqslant 10$ and $\|v_i\|=3$ for $11\leqslant
i\leqslant 14$ in the notation of \eqref{eqGenSchemeU}.

Integral numbers standing in $\Lambda(v)$ at positions from
$\supp(v)$ are called \emph{odds} of $v$, numbers, standing at
other positions are called \emph{evens} of $v$. Thus, for example,
odds of $v_5$ are $\{a_5,a_6\}$ and its evens are
$\{b_{13},b_{14}\}$. Odds are odd numbers and evens are even
numbers as was mentioned before. Moreover, all odds are $\pm1$ by
Lemma \ref{lemmaPMUnits}. By Lemma \ref{lemmaChangeSign} we may
assume that the first odd of each $v_i\in\Zt^4\setminus\{0\}$ is
$1$. The vector $v_i\in \Zt^4\setminus\{0\}$ is called alternated
if $\Lambda(v_i)$ contains both $+1$ and $-1$ as odds. If $v_i$ is
not alternated, then all its odds are~$+1$.

\begin{lemma} \label{lemEvens02}
If $v_i\in\Zt^4$ is alternated, then all its evens are equal to
$0$. If $v_i\in\Zt^4$ is not alternated, then its evens are equal
to $0$ or $2$.
\end{lemma}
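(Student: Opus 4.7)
The plan is to produce, for each $v_i\in\Zt^4\setminus\{0\}$ with $2\leqslant\|v_i\|\leqslant 3$, each non-support position $k\notin\supp(v_i)$, and each $i_*\in\supp(v_i)$, an explicit $\Zt$-basis of $\Zt^4$ containing $v_i$ and $v_{15}$ whose integer determinant, by \eqref{eqStarCondS}, yields a linear equation of the form $a_{i_*}-b_k=\pm 1$. Then intersecting the two-element sets $\{a_{i_*}-1,\,a_{i_*}+1\}$ over $i_*\in\supp(v_i)$ immediately separates the alternated and non-alternated cases.

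I would define
\[
\{l_1,l_2\}=\bigl(\supp(v_i)\setminus\{i_*\}\bigr)\,\cup\,\bigl(\{1,2,3,4\}\setminus(\supp(v_i)\cup\{k\})\bigr),
\]
which has exactly two elements when $\|v_i\|\in\{2,3\}$. The identity $v_i+\sum_{l\in\supp(v_i)\setminus\{i_*\}}e_l=e_{i_*}$ puts $e_{i_*}$, $e_{l_1}$, $e_{l_2}$ in the span, and then $v_{15}+e_{i_*}+e_{l_1}+e_{l_2}=e_k$ supplies the missing standard basis vector, so $\{v_i,v_{15},e_{l_1},e_{l_2}\}$ is a $\Zt$-basis. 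The condition \eqref{eqStarCondS} therefore forces the integer determinant of the matrix $M$ with rows $\Lambda(v_i)$, $\Lambda(v_{15})=(1,1,1,1)$, $e_{l_1}$, $e_{l_2}$ to equal $\pm 1$.

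To compute this determinant, I would subtract appropriate integer multiples of the rows $e_{l_1}$ and $e_{l_2}$ from $\Lambda(v_i)$ and $\Lambda(v_{15})$ to clear their $l_1$- and $l_2$-entries (this does not touch the $i_*$- or $k$-entries). After permuting columns to the order $i_*,k,l_1,l_2$, the matrix becomes block-diagonal with lower block $I_2$ and upper block
\[
\begin{pmatrix} a_{i_*} & b_k \\ 1 & 1 \end{pmatrix},
\]
so $\det M=\pm(a_{i_*}-b_k)$. Together with $\det M=\pm 1$ this gives $b_k\in\{a_{i_*}-1,\,a_{i_*}+1\}$.

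Finally I intersect these sets as $i_*$ ranges over $\supp(v_i)$. If $v_i$ is alternated, there exist $i_*^+,i_*^-\in\supp(v_i)$ with $a_{i_*^+}=+1$ and $a_{i_*^-}=-1$; the corresponding constraints $\{0,2\}$ and $\{-2,0\}$ intersect in $\{0\}$, so $b_k=0$. If $v_i$ is not alternated, then by the normalization that the first odd is $+1$ every odd is $+1$, so every choice of $i_*$ gives $b_k\in\{0,2\}$. The only slightly delicate point is checking that $\{v_i,v_{15},e_{l_1},e_{l_2}\}$ really is a $\Zt$-basis for every admissible triple $(v_i,k,i_*)$, and that is exactly what the two $\Zt$-identities displayed above guarantee; this verification is the main thing I would double-check during the write-up.
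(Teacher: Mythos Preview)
Your argument is correct and is essentially the same as the paper's proof: both complete $v_i$ and $v_{15}$ to a $\Zt$-basis using two standard basis vectors, then observe that the resulting integer determinant equals $\pm(a_{i_*}-b_k)$, giving $b_k\in\{a_{i_*}-1,a_{i_*}+1\}$ and hence the two cases by intersection. The paper presents one representative configuration (with $e_1,e_2$ and the even/odd at positions $3,4$) and tacitly appeals to symmetry, whereas you parametrize the choice of $\{l_1,l_2\}$ explicitly; but the mechanism and the key relation $a_{i_*}-b_k=\pm1$ are identical.
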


\begin{proof}
Consider the matrix:
$$
A=\begin{pmatrix}
e&0&0&0\\
0&e&0&0\\
e&e&e&e\\
*&*&0&e
\end{pmatrix}\quad\rightsquigarrow\quad B=\begin{pmatrix}
1&0&0&0\\
0&1&0&0\\
1&1&1&1\\
*&*&b_j&a_i
\end{pmatrix}
$$
Since $\det(A)=e$, \eqref{eqStarCondS}-condition implies $\det B
=\pm1$. Therefore, $a_i-b_j=\pm1$. If $a_i=1$, then $b_j$ is
either $0$ or $2$ which proves the second part of the statement.
If $a_i=-1$, then $b_j=0$ or $-2$. If $v$ is alternated, then each
$b_j$ should be either $0$ or $2$, and, on the other hand, it
should be either $0$ or $-2$ by the same reasons. Thus $b_j=0$ in
the alternated case.
\end{proof}

Lemma \ref{lemEvens02} reduces the task of finding characteristic
function from $\Ur_4$ to $\Zo^4$ to the finite enumeration of
possibilities. Each $a_i$ can be $1$ or $-1$ and $b_i$ can be $0$
or $2$. But still there are too many possibilities to use
computer-aided search; we want to simplify the task a bit more.

\begin{lemma}
Let $v_i,v_j\in\Zt^4$ be two different alternated vectors and
$\|v_i\|=\|v_j\|=2$. Then $\supp(v_i)\cap\supp(v_j)=\varnothing$.
\end{lemma}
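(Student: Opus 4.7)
The plan is to assume the supports intersect and derive a contradiction from \eqref{eqStarCondS}. Since $v_i\neq v_j$ both have Hamming norm $2$, the intersection is either empty or a singleton; suppose toward contradiction it is a singleton $\{s\}$. Write $\supp(v_i)=\{s,p\}$, $\supp(v_j)=\{s,q\}$ with $p\neq q$, and let $n$ be the fourth coordinate, so that $v_n\in\{v_1,\dots,v_4\}$ is the standard basis vector $e_n\in\Zt^4$.

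First I would verify that $(v_i,v_j,v_n,v_{15})$ is a $\Zt$-basis of $\Zt^4$. This is a short subset-sum check: for instance $v_i+v_j=e_p+e_q\neq 0$, $v_i+v_j+v_n=e_p+e_q+e_n\neq 0$, and $v_i+v_j+v_n+v_{15}=e_s\neq 0$, with the remaining proper subsets handled similarly. Hence by condition \eqref{eqStarCondS} we must have $\det(\Lambda(v_i),\Lambda(v_j),\Lambda(v_n),\Lambda(v_{15}))=\pm 1$ in $\Zo$.

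Next I would evaluate this integer determinant from the known structure of $\Lambda$. By Lemmas \ref{lemEvens02} and \ref{lemmaPMUnits}, $\Lambda(v_i)=\alpha_s e_s+\alpha_p e_p$ with $\alpha_s,\alpha_p\in\{\pm 1\}$ and, by alternation, $\alpha_p=-\alpha_s$; likewise $\Lambda(v_j)=\beta_s e_s+\beta_q e_q$ with $\beta_q=-\beta_s$. From Lemma \ref{lemmaWLOGbaseToBase} the column $\Lambda(v_n)$ is the $n$-th integer standard basis vector, and $\Lambda(v_{15})=(1,1,1,1)$. Expanding along the third column (which has a single nonzero entry, the $1$ in row $n$) reduces to a $3\times 3$ determinant on rows $s,p,q$; its direct expansion gives $-\alpha_s\beta_q-\alpha_p\beta_s+\alpha_p\beta_q$, and substituting $\alpha_p=-\alpha_s$, $\beta_q=-\beta_s$ yields $3\alpha_s\beta_s=\pm 3$, contradicting the required $\pm 1$.

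There is no substantial obstacle here: the argument is essentially a single determinant calculation. The only care needed is the $\Zt$-basis check and the sign bookkeeping. The structural reason the value comes out $\pm 3$ rather than cancelling is that once alternation ($\alpha_p=-\alpha_s$, $\beta_q=-\beta_s$) eliminates the coupling between $v_i$ and $v_j$, the final column $(1,1,1,1)$ contributes three equal-sign terms.
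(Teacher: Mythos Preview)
Your proof is correct and follows the same idea as the paper's: both pick the quadruple $(v_i,v_j,v_n,v_{15})$ where $v_n$ is the standard basis vector at the remaining coordinate, verify it is a $\Zt$-basis, and compute the integer determinant to obtain $\pm 3$, contradicting \eqref{eqStarCondS}. The only difference is cosmetic: the paper fixes the WLOG instance $v_5,v_8,v_4,v_{15}$ with the ``first odd is $1$'' normalization and writes out the numerical determinant $3$ directly, whereas you carry the signs $\alpha_s,\beta_s$ symbolically and arrive at $3\alpha_s\beta_s$.
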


\begin{proof}
Suppose that supports intersect. Without loss of generality $i=5$,
$j=8$. We have $\Lambda(v_5) = (1,-1,0,0)$ and $\Lambda(v_8) =
(0,1,-1,0)$ by Lemma \ref{lemEvens02}. Then
$$
\det\begin{pmatrix}
e&e&0&0\\
0&e&e&0\\
e&e&e&e\\
0&0&0&e
\end{pmatrix}=e\quad\rightsquigarrow\quad\det
\begin{pmatrix}
1&-1&0&0\\
0&1&-1&0\\
1&1&1&1\\
0&0&0&1
\end{pmatrix}=3,
$$
so the \eqref{eqStarCondS}-condition is violated. The
contradiction.
\end{proof}

Now we use the following algorithm to show that a lift $\Lambda$,
satisfying \eqref{eqStarCondS} does not exist. Consider three
possible cases depending on the number of alternated vectors among
$v_5,\ldots,v_{10}$: (a) there are no alternated vectors; (b)
there is exactly one alternated vector, say $v_5$; (c) there are
two alternated vectors with nonintersecting supports, say $v_5$
and $v_{10}$. In each case do the following \texttt{
\begin{enumerate}
\item Find all values of $(b_{13},\ldots,b_{24})\in\{0,2\}^{12}$ for
which \eqref{eqStarCondS}-condition is sa\-tis\-fied on the set
$\{v_1,\ldots,v_{10},v_{15}\}$.
\item For each output of the previous step check all
$(b_{25},\ldots,b_{28})\in\{0,2\}^4$ and
$(\widehat{a_{17}},a_{18},\ldots,\widehat{a_{20}},\ldots,\widehat{a_{23}},
\ldots, \widehat{a_{26}},\ldots,a_{28})\in\{1,-1\}^{8}$ for which
\eqref{eqStarCondS}-con\-di\-tion is satisfied on the whole set
$\{v_1,\ldots,v_{15}\}=\Zt^4\setminus\{0\}$.
\end{enumerate}
}

The task is split into two steps to reduce the time of
computation. GAP system \cite{GAP4} was used to perform this
calculation. The implementation of described algorithm shows that
there are no values of $a_i$ and $b_i$ for which
\eqref{eqStarCondS}-condition is satisfied. Thus $r(\Ur_4)>4$ and
$$
s(\Ur_4)<11=\sr(\Ur_4),
$$
which was to be proved.


\section{Buchstaber number is not determined by bigraded Betti
numbers}\label{SecBettiBuch}

\subsection{Technique of the proof}

This section contains the proof of Theorem \ref{thmBettiBuch}. To
construct simplicial complexes with desired properties the
following ingredients are used:
\begin{itemize}
\item The characterization of the Buchstaber invariant in terms of
minimal non-simplices, found by N.\,Erokhovets.
\item The Taylor resolution of a Stanley--Reisner module. We use this
resolution to construct different simplicial complexes with equal
bigraded Betti numbers.
\end{itemize}

\subsection{Erokhovets criterium}

A subset $I\subseteq[m]$ is called a \emph{minimal non-simplex}
(or a missing face) of $K$ if $I\notin K$, but $J\in K$ for any
$J\subsetneq I$. The set of all minimal non-simplices of $K$ is
denoted by $N(K)$.

\begin{stm}[N.\,Erokhovets \cite{ErNew,ErNewBig}] \label{stmErokhCrit}
The following conditions are equivalent:
\begin{enumerate}
\item $s(K)\geqslant 2$;
\item $\sr(K)\geqslant 2$;
\item \label{eqErokhCrit} there exist $J_1,J_2,J_3\in N(K)$ such that $J_1\cap
J_2\cap J_3=\varnothing$. Sets $J_i$ are allowed to coincide.
\end{enumerate}
\end{stm}

The next example will be used in the proof of
Theorem~\ref{thmBettiBuch}.

\begin{figure}[h]
\begin{center}
\includegraphics[scale=0.2]{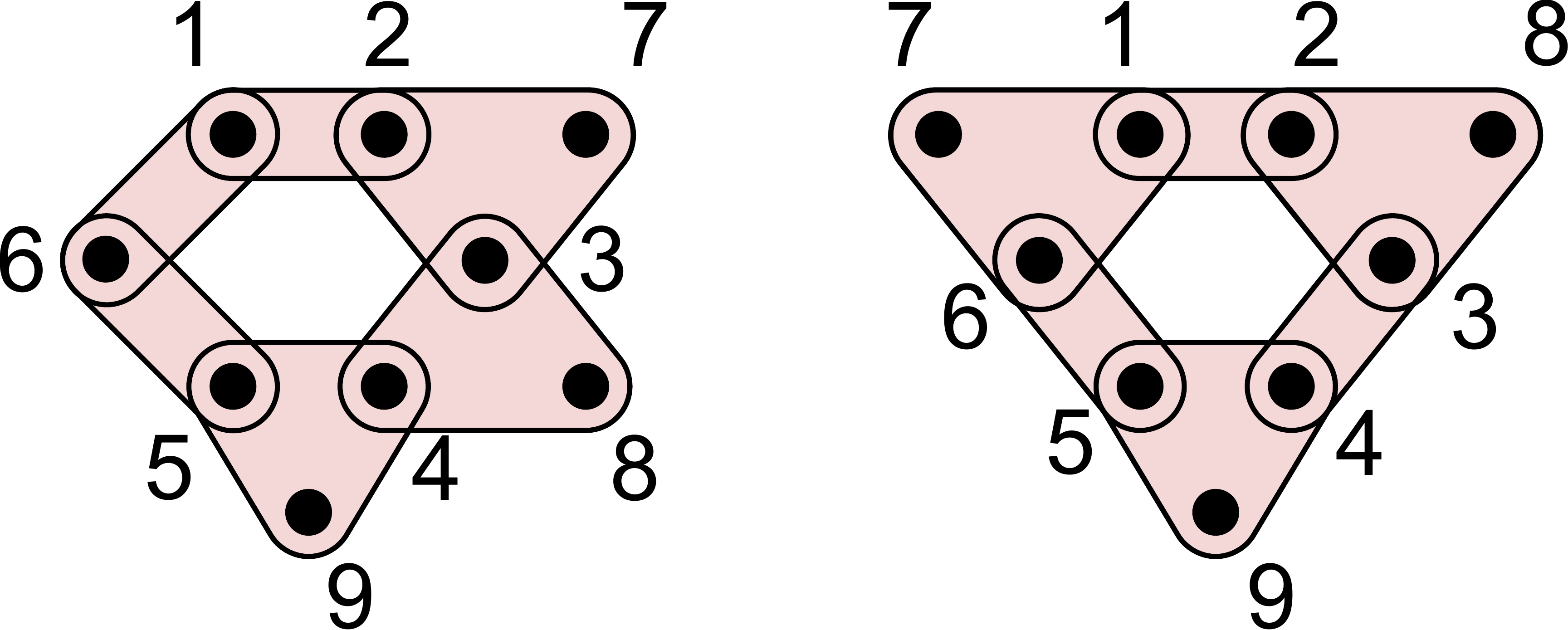}
\end{center}
\caption{Collections $\Cc_1$ and $\Cc_2$ of
subsets}\label{pictSetCollections}
\end{figure}

\begin{ex}\label{exSetColl} Let $\Sn\eqd\{1,2,\ldots,9\}$. Consider
two collections of subsets of $\Sn$ shown on
fig.\ref{pictSetCollections}. In the first collection there are no
$A_1,A_2,A_3\in \Cc_1$ such that $A_1\cup A_2\cup A_3=\Sn$. On the
other hand there exist $A_1,A_2,A_3\in \Cc_2$ such that $A_1\cup
A_2\cup A_3=\Sn$.

Now consider simplicial complexes $L_1$ and $L_2$ with
$N(L_i)=\{I\subset \Sn\mid \Sn\setminus I\in \Cc_i\}$ for $i=1,2$.
Then condition \eqref{eqErokhCrit} of Statement \ref{stmErokhCrit}
does not hold for $L_1$, but holds for $L_2$. Statement
\ref{stmErokhCrit} implies $s(L_1)=1$ and $s(L_2)\geqslant 2$ (and
the same for $\sr$).
\end{ex}

\begin{rem} One can consider collections $\Cc_1$ and $\Cc_2$ as
simplicial complexes. Then the complexes $L_i$ are Alexander duals
of $\Cc_i$ by the definition of combinatorial Alexander duality
(see e.g. \cite[Ex.2.26]{BP}).
\end{rem}

\subsection{Bigraded Betti numbers and Taylor resolution}

First, we review the basics of commutative algebra needed to
define bigraded Betti numbers.

Let $\ko$ be a ground field and $\ko[m]=\ko[v_1,\ldots,v_m]$ ---
the polynomial algebra graded by $\deg v_i=2$. Also define the
multigrading by $\mdeg(v_1^{n_1}\cdot\ldots\cdot
v_m^{n_m})=(2n_1,\ldots,2n_m)\in\Zo^m$. Denote by $\ko[m]^{+}$ the
maximal graded ideal of $\ko[m]$ --- i.e. the ideal generated by
monomials of positive degrees.

The \emph{Stanley--Reisner algebra} (otherwise called the
\emph{face ring}) of a simplicial complex $K$ on $m$ vertices is
the quotient algebra $\ko[K]=\ko[m]/I_{SR}(K)$, where $I_{SR}(K)$
is the square-free ideal generated by monomials, corresponding to
non-simplices of $K$:
$$
I_{SR}(K)=(v_{i_1}\cdot\ldots\cdot v_{i_k}\mid
\{i_1,\ldots,i_k\}\notin K);
$$
Both $\ko$ and $\ko[K]$ carry the structure of (multi)graded
$\ko[m]$-modules via quotient epimorphisms
$\ko[m]\to\ko[m]/\ko[m]^{+}\cong\ko$ and $\ko[m]\to\ko[K]$. Then
$\Tor^{*,*}_{\ko[m]}(\ko[K],\ko)$ is a $\Tor$-functor of
(multi)graded modules $\ko[K]$ and $\ko$. Recall its standard
construction in homological algebra.

\begin{con}\label{conTorViaResol} To describe
$\Tor^{*,*}_{\ko[m]}(\ko[K],\ko)$ do the following:
\begin{enumerate}
\item Take any free resolution of the module $\ko[K]$ by
(multi)graded $\ko[m]$-modules:
$$
\ldots \xrightarrow{d} R^{-\ell} \xrightarrow{d}
R^{-\ell+1}\xrightarrow{d}\ldots\to R^{-1}\to R^{0}\xrightarrow{d}
\ko[K]\qquad (\R)
$$
\item apply the functor $\otimes_{\ko[m]}\ko$;
\item take cohomology of the resulting complex:
$$
\Tor^{*,*}_{\ko[m]}(\ko[K],\ko)\congd
H^*(\R^*\otimes_{\ko[m]}\ko;d\otimes_{\ko[m]}\ko)
$$
\end{enumerate}
The resulting vector space inherits inner (multi)grading from
$\R$. It also obtains an additional grading $-\ell$. It is well
known that $\Tor^{*,*}_{\ko[m]}(\ko[K],\ko)\cong
\bigoplus_{(\ell,\bar{j})\in\Zo^{m+1}}\Tor^{-\ell,2\bar{j}}_{\ko[m]}(\ko[K],\ko)$
does not depend on the choice of a free (multi)graded resolution
$\R$. Define \emph{bigraded Betti numbers} of $K$ as
$$\beta^{-\ell,2j}(K)\eqd\dim_{\ko}\Tor_{\ko[m]}^{-\ell,2j}(\ko[K],\ko).$$
\end{con}

\begin{defin}[Minimal resolution]
A resolution $\R$ is called \emph{minimal} if $\im(d)\subset
\ko[m]^{+}\cdot\R$, or, equivalently, $d\otimes_{\ko[m]}\ko = 0$.
\end{defin}

For a minimal resolution $\R$ step (3) in Construction
\ref{conTorViaResol} is skipped. Therefore:
$$\beta^{-\ell,2j}(K)= \mbox{number of generators of the module }R^{-\ell}\mbox{ in degree } 2j.$$

Several explicit constructions of free resolutions of $\ko[K]$ are
known. In our considerations we use one of the most important and
basic constructions --- the Taylor resolution. In general, Taylor
resolution is defined for any monomial ideal (see \cite{MS} or
\cite{Merm}). Here we concentrate only on the case of
Stanley--Reisner rings, i.e. the case of square-free monomial
ideals. The work \cite{Wang} is also concerned with this
particular case.

In the sequel the following convention is used. Any subset
$B\subseteq [m]$ determines the vector $\delta_B\in \Zo^m$ with
$i$-th coordinate equal to $1$ if $i\in B$ and $0$ otherwise. We
simply write $B\in \Zo^m$ meaning $\delta_B\in \Zo^m$. For a set
$B$ we denote the monomial
$\overline{v}^{\delta_B}=v_1^{\delta_B^1}\ldots
v_m^{\delta_B^m}\in \ko[m]$ simply by $v^B$.

\begin{con}[Taylor resolution] \label{conTaylorRes} Consider the set
$N(K)$ of minimal non-simplices. Fix a linear order on $N(K)$. For
each $J\in N(K)$ associate a formal variable $w_J$ and construct
the free $\ko[m]$-module
$$R_T^{-\ell}\eqd\Lambda^{\ell}[\{w_J\}]\otimes\ko[m]$$
Here $\Lambda^{\ell}[\{w_J\}]$ is the vector space over $\ko$,
generated by formal expressions
$W_\sigma=w_{J_1}\wedge\ldots\wedge w_{J_\ell}$ for all subsets
$\sigma=\{J_1,\ldots, J_\ell\}\subseteq N(K)$,
$J_1<\ldots<J_\ell$.

Define the multigrading
\begin{equation}\label{eqMdegOfW}
\mdeg(w_{J_1}\wedge\ldots\wedge w_{J_\ell}) \eqd \left(-\ell,
2\bigcup_{i=1}^{\ell}J_i\right)\in \Zo\times\Zo^{m},
\end{equation}
and the double grading
$$
\bideg(w_{J_1}\wedge\ldots\wedge w_{J_\ell}) \eqd \left(-\ell,
2\left|\bigcup_{i=1}^{\ell}J_i\right|\right) \in \Zo^2.
$$
The first component is called a homological grading.

Define the differential of $\ko[m]$-modules $d_T\colon
R_T^{-\ell}\to R_T^{-\ell+1}$ on the generators
$W_{\sigma}=w_{J_1}\wedge\ldots\wedge w_{J_\ell}$ by
\begin{equation}\label{eqTaylorDif}
d_T(w_{J_1}\wedge\ldots\wedge w_{J_\ell}) \eqd
\sum\limits_{i=1}^{\ell}(-1)^{i+1} v^{X_{\sigma,J_i}}\cdot
w_{J_1}\wedge\ldots \widehat{w_{J_i}} \ldots\wedge w_{J_\ell},
\end{equation}
where $v^{X_{\sigma,J_i}}\in \ko[m]$ is the monomial corresponding
to the set
$$X_{\sigma, J_i}\eqd J_i\setminus \left(J_1\cup\ldots
\widehat{J_i}\ldots\cup J_\ell\right) \subset [m].$$

Define the multiplication on the $\ko[m]$-module
$\R_T=\bigoplus_{\ell} R_T^{-\ell}$ by describing the products of
generators. Let $\sigma=\{J_1<\ldots<J_\ell\},
\tau=\{I_1<\ldots<I_k\}\subseteq N(K)$.
\begin{equation}\label{eqTaylorMultGener}
W_{\sigma}\cdot W_{\tau}\eqd \begin{cases}0, \mbox{ if }
\sigma\cap\tau\neq\varnothing;\\
\sgn(\sigma,\tau)v^{Y_{\sigma,\tau}} W_{\sigma\sqcup\tau}, \mbox{
otherwise.}
\end{cases}
\end{equation}
Here $v^{Y_{\sigma,\tau}}\in \ko[m]$ is the monomial corresponding
to the set of indices $Y_{\sigma,\tau} =
(\bigcup_{\sigma}J_i)\cap(\bigcup_{\tau}I_i)$. The sign
$\sgn(\sigma,\tau)$ is the sign of the permutation needed to sort
the unordered set $(J_1,\ldots,J_\ell,I_1,\ldots,I_k)$.
\end{con}

\begin{prop}[\cite{MS},\cite{Merm}]\mbox{}

\begin{enumerate}
\item The vector space $\R_T=\bigoplus_{\ell} R_T^{-\ell}$ is a
differential $\Zo^{m+1}$-graded algebra over the ring $\ko[m]$
w.r.t. to the multigrading, the differential, and the
multiplication described above. This algebra is skew-commutative
with respect to homological grading.

\item $H^{-\ell}(\R_T,d)=0$ if $\ell>0$. $H^0(\R_T,d)\cong\ko[K]$
as $\ko[m]$-algebras.
\end{enumerate}
Therefore, $\R_T$ is a free multiplicative resolution of a
Stanley--Reisner algebra $\ko[K]$.
\end{prop}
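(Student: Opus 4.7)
The proof splits naturally into two essentially independent parts: (i) verification that $\R_T$ with the stated operations is a differential $\Zo^{m+1}$-graded skew-commutative algebra over $\ko[m]$; and (ii) proof that $H^{-\ell}(\R_T, d_T) = 0$ for $\ell>0$ while $H^0(\R_T, d_T)\cong\ko[K]$ as $\ko[m]$-algebras.

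For (i), I would first check preservation of the $\Zo\times\Zo^m$-multi\-grading. The differential case reduces to the identity $X_{\sigma, J_i}\cup\bigcup_{j\neq i} J_j = \bigcup_j J_j$, which holds because $X_{\sigma, J_i}=J_i\setminus\bigcup_{j\neq i}J_j$. The multiplication case reduces to $Y_{\sigma,\tau}\cup\bigcup_i J_i\cup\bigcup_j I_j = \bigcup_i J_i\cup\bigcup_j I_j$, immediate from $Y_{\sigma,\tau}\subseteq(\bigcup_i J_i)\cap(\bigcup_j I_j)$. Next, $d_T^2=0$ is a Koszul-style cancellation: in the coefficient of $W_{\sigma\setminus\{J_i,J_j\}}$ inside $d_T^2 W_\sigma$, the two paths ``remove $J_i$ then $J_j$'' and ``remove $J_j$ then $J_i$'' produce the same squarefree monomial in $\ko[m]$ and opposite shuffle signs. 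Skew-commutativity $W_\sigma\cdot W_\tau=(-1)^{|\sigma||\tau|}W_\tau\cdot W_\sigma$ follows from $\sgn(\sigma,\tau)=(-1)^{|\sigma||\tau|}\sgn(\tau,\sigma)$ and the symmetry $Y_{\sigma,\tau}=Y_{\tau,\sigma}$, associativity follows from the triple-shuffle identity plus $Y_{\sigma,\tau}\cup Y_{\sigma\sqcup\tau,\rho}= Y_{\tau,\rho}\cup Y_{\sigma,\tau\sqcup\rho}$, and the Leibniz rule is a parallel expansion matching terms according to which $J\in\sigma\sqcup\tau$ is being dropped.

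For (ii), the computation of $H^0$ is immediate: $R_T^0=\ko[m]$, and the image of $d_T\colon R_T^{-1}\to R_T^0$ is generated by $d_T(w_J)=v^J$ for $J\in N(K)$, i.e.\ the ideal $I_{SR}(K)$, so $H^0\cong\ko[m]/I_{SR}(K)=\ko[K]$; the isomorphism is one of $\ko[m]$-algebras because the product on $R_T^0$ is the product in $\ko[m]$ and $W_\varnothing=1$ is the unit. Acyclicity in positive homological degrees is the substantial part. I would induct on $r=|N(K)|$: order $N(K)=\{J_1<\cdots<J_r\}$, let $\R_T^{(k)}$ be the subcomplex generated by $w_{J_1},\ldots,w_{J_k}$, and realize $\R_T^{(k)}$ as the mapping cone of a chain endomorphism $\phi_k\colon\R_T^{(k-1)}\to\R_T^{(k-1)}$ lifting multiplication by $v^{J_k}$ modulo the image of $d_T$ on $\R_T^{(k-1)}$.

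The main obstacle is that $v^{J_k}$ is in general a zero-divisor modulo $(v^{J_1},\ldots,v^{J_{k-1}})$, so the long exact sequence of the mapping cone does not immediately collapse to acyclicity. The remedy exploits the squarefree structure: any relation $v^{J_k}\cdot f\in(v^{J_1},\ldots,v^{J_{k-1}})$ splits according to which previous $J_i$ absorbs which monomial component of $f$, and this combinatorial splitting produces an explicit preimage one step higher in the Taylor complex. A cleaner repackaging of the same idea is to observe that $\R_T$ is, multidegree by multidegree, a monomial refinement of the augmented simplicial chain complex of the full simplex on the vertex set $N(K)$; contractibility of this simplex yields acyclicity, and the monomial coefficients $v^{X_{\sigma,J_i}}$ never obstruct the nullhomotopy because in each fixed multidegree $\bar j\in\Zo^m$ the relevant squarefree monomials all divide $v^{\bar j}$ uniformly. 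This combinatorial verification carries the bulk of the work and closes the argument.
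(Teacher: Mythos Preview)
The paper does not prove this proposition; it is stated with citations to \cite{MS} and \cite{Merm} and immediately followed by examples. Your proposal therefore supplies what the paper deliberately omits, and there is no in-paper argument to compare against.

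Your sketch is essentially correct, and the multidegree-by-multidegree argument you give at the end of part~(ii) is exactly the standard proof found in the cited references: in a fixed multidegree $2\mathbf{a}\in 2\Zo_{\geqslant 0}^m$, the graded piece of $\R_T$ is naturally identified with the augmented simplicial chain complex of the full simplex on the vertex set $\{J\in N(K)\mid J\subseteq\supp(\mathbf{a})\}$, which is acyclic whenever that set is nonempty and contributes the single class $v^{\mathbf{a}}\in\ko[K]$ in homological degree~$0$ when it is empty.

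Two small corrections to part~(i). First, your verification that the product respects the $\Zo^m$-grading is not the right identity: with $A=\bigcup_{J\in\sigma}J$ and $B=\bigcup_{I\in\tau}I$ you need the equality of integer vectors $\delta_{A\cap B}+\delta_{A\cup B}=\delta_A+\delta_B$ (inclusion--exclusion), not merely the set containment $Y_{\sigma,\tau}\subseteq A\cup B$ that you wrote. The same replacement of ``$\cup$'' by ``$+$'' (sum of characteristic vectors) is needed in your associativity identity. Second, the mapping-cone approach in part~(ii) is more delicate than your phrasing suggests: the quotient $\R_T^{(k)}/\R_T^{(k-1)}$ is not a shift of $\R_T^{(k-1)}$ under the obvious bijection $W_{\sigma'\cup\{J_k\}}\leftrightarrow W_{\sigma'}$, because the monomial weights $v^{X_{\sigma,J_i}}$ change when $J_k$ is adjoined to $\sigma$; and as you correctly note, the zero-divisor obstruction is genuine. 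Since your multidegree argument already closes the gap, none of this affects the validity of the overall proof.
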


\begin{ex}\label{exKoszul} Let $o_m$ be a simplicial complex on a set
$[m]$ in which all vertices are ghost. We have $\ko[o_m]\cong\ko$
and $N(o_m)=[m]$. The Taylor resolution in this case is given by
$R_T^{-\ell}=\Lambda^{\ell}[u_1,\ldots,u_m]\otimes\ko[m]$, where
formal variables $u_i$ correspond to elements of $N(o_m)=[m]$ and
$\bideg u_i=(-1,2)$. By looking at general definitions of
differential and product we see that $\R_T$ is isomorphic to
$\Lambda[u_1,\ldots,u_m]\otimes\ko[m]$ with the standard Grassmann
product, and the differential given by $du_i=v_i$. In this example
we get the multiplicative resolution
$\Lambda[u_1,\ldots,u_m]\otimes\ko[m]$ of the $\ko[m]$-module
$\ko$. This resolution is widely known as \emph{Koszul
resolution}.
\end{ex}

\begin{ex} Let $K$ be the boundary of a square. Its maximal simplices
are $\{1,2\}$, $\{2,3\}$, $\{3,4\}$, $\{1,4\}$. In this case
$N(K)=\{\{1,3\}, \{2,4\}\}$. The Taylor resolution has the form
$$
\xymatrix{ \Lambda^{(2)}[w_{\{1,3\}},w_{\{2,4\}}]\otimes\ko[4]
\ar@{->}[r]^{d_2} \ar@{=}[d]&
\Lambda^{(1)}[w_{\{1,3\}},w_{\{2,4\}}]\otimes\ko[4]
\ar@{->}[r]^(0.7){d_1} \ar@{=}[d]& \ko[4]\cdot
1\ar@{->>}[r]&\ko[K]
\\
W_{\{\{1,3\},\{2,4\}\}}\cdot\ko[4]& w_{\{1,3\}}\cdot\ko[4]\oplus
w_{\{2,4\}}\cdot\ko[4]&& }
$$
with the multigrading
\begin{align*}
\mdeg(w_{\{1,3\}})&=(-1;(2,0,2,0)),\\
\mdeg(w_{\{2,4\}})&=(-1;(0,2,0,2)),\\
\mdeg(W_{\{\{1,3\},\{2,4\}\}})&=(-2;(2,2,2,2)),
\end{align*}
the differentials
\begin{align*}
d_1(w_{\{1,3\}})&=v_1v_3\cdot 1,\\
d_1(w_{\{2,4\}})&=v_2v_4\cdot 1, \\
d_2(W_{\{\{1,3\},\{2,4\}\}})&= v_1v_3\cdot w_{\{2,4\}} -
v_2v_4\cdot w_{\{1,3\}},
\end{align*}
and the product $w_{\{1,3\}}\times w_{\{2,4\}} =
W_{\{\{1,3\},\{2,4\}\}} = -w_{\{2,4\}}\times w_{\{1,3\}}$.
Clearly, $\im(d_2)=\ker(d_1)$ and $\im(d_1)=I_{SR}(K)$.
\end{ex}

\begin{ex}\label{exProdSimpl} Let $\Delta_{M}$ denote a simplex on a
set $M\neq\varnothing$. Consider
$K=\partial~\Delta_{M_1}~\ast~\ldots~\ast~\partial~\Delta_{M_k}$
--- a simplicial sphere on a set $M_1\sqcup\ldots\sqcup M_n$. Then
$N(K)=\{M_1,\ldots,M_n\}$. The Taylor resolution of $K$ is a
differential algebra
$$
\Lambda^*[w_1,\ldots,w_n]\otimes\ko[M_1\sqcup\ldots\sqcup M_n]
$$
with the standard Grassmann product, $\bideg(w_i)=(-1,2|M_i|)$,
and differential:
$$
d_T(w_{i_1}\wedge\ldots\wedge
w_{i_\ell})=\sum\limits_{k=1}^\ell(-1)^{k+1}v^{M_{i_k}}
w_{i_1}\wedge\ldots \widehat{w_{i_k}}\ldots \wedge w_{i_\ell}.
$$
The Taylor resolution is minimal, therefore
$\Tor^{*,*}_{\ko[M_1\sqcup\ldots\sqcup M_n]}(\ko[K];\ko)\cong
\Lambda^*[w_1,\ldots,w_n]$. Both previous examples are the
particular cases of this one.
\end{ex}

\subsection{Multiplication in $\Tor$}

\begin{con}\label{conMultInTor} There is a standard way to understand
the structure of $\Tor^{*,*}_{\ko[m]}(\ko[K];\ko)$ using Koszul
resolution. At first, note that
$\Tor^{*,*}_{\ko[m]}(\ko[K];\ko)\cong\Tor^{*,*}_{\ko[m]}(\ko;\ko[K])$.
By construction, $$\Tor^{*,*}_{\ko[m]}(\ko;\ko[K])\cong
H^*(\R\otimes_{\ko[m]}\ko[K]; d\otimes_{\ko[m]}\ko[K]),$$ where
$(\R^*,d)$ is any graded free resolution of $\ko$ as a
$\ko[m]$-module. Take for example Koszul resolution
$\R^{-\ell}\cong \Lambda[u_1,\ldots,u_m]\otimes\ko[m]$ with
grading and differential as described in Example \ref{exKoszul}.
Then
\begin{equation}
\Tor^{*,*}_{\ko[m]}(\ko;\ko[K])\cong
H^*(\Lambda[u_1,\ldots,u_m]\otimes\ko[K];
d\otimes_{\ko[m]}\ko[K]).
\end{equation}
The differential complex $\Lambda[u_1,\ldots,u_m]\otimes\ko[K]$
has the structure of a differential graded algebra. Thus
$\Tor^{*,*}_{\ko[m]}(\ko;\ko[K])$ has the structure of an algebra
as well. The word ``$\Tor$-algebra'' usually refers to this
definition of a multiplication.
\end{con}

\begin{stm}[\cite{BP2,Franz}]
The cohomology ring $H^*(\Z_K;\ko)$ is isomorphic as a graded
algebra to the $\Tor$-algebra $\Tor^{*,*}_{\ko[m]}(\ko[K];\ko)$
with the total grading $(-i,2j)\rightsquigarrow 2j-i$.
\end{stm}

\begin{rem}\label{remMultSame} According to Construction
\ref{conTorViaResol},
\begin{equation}\label{eqTorTaylor}
\Tor^{*,*}_{\ko[m]}(\ko[K];\ko)\cong
H^*(\R_T\otimes_{\ko[m]}\ko;d_T\otimes_{\ko[m]}\ko),
\end{equation}
where $(\R_T,d_T)$ is the Taylor resolution of $\ko[K]$. The
differential complex $\R_T\otimes_{\ko[m]}\ko$ obtains the
multiplication from the multiplication in the Taylor resolution.
This, in turn, induces the multiplication on
$H^*(\R_T\otimes_{\ko[m]}\ko;d_T\otimes_{\ko[m]}\ko)$. A priori it
is not clear, whether this multiplication on
$\Tor^{*,*}_{\ko[m]}(\ko[K];\ko)$ is the same as given by
Construction \ref{conMultInTor} or not. Fortunately, this
multiplicative structures are indeed the same (see e.g.
\cite[Constr. 2.3.2]{Avr}). So far the cohomological product in
$H^*(\Z_K;\ko)$ in some cases can be described in terms of the
Taylor resolution \cite{Wang}.
\end{rem}

\subsection{Taylor resolutions and minimality}

When the Taylor resolution is minimal, the benefits of both
notions --- Taylor resolution and minimality --- can be used.

\begin{lemma}\label{lemmaTayMin}
Let $K$ be a simplicial complex on $[m]$ and $N(K)$ --- the set of
minimal non-simplices. The following conditions are equivalent:
\begin{enumerate}
\item The Taylor resolution $(\R_T,d_T)$ of $\ko[K]$ is minimal.
\item Any minimal simplex $J\in N(K)$ is not a subset in the union of
others:
\begin{equation}\label{eqMinimalityCond}
J\nsubseteq \bigcup_{I\in N(K), I\neq J}I.
\end{equation}
\end{enumerate}
\end{lemma}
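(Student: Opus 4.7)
The plan is to unwind the minimality condition $\im(d_T) \subseteq \ko[m]^{+} \cdot \R_T$ into a combinatorial condition on the exponent sets $X_{\sigma, J_i}$ appearing in the Taylor differential, and then to observe that the strongest such requirement across all $\sigma$ is captured precisely by condition (\ref{eqMinimalityCond}).

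First I would note that by formula (\ref{eqTaylorDif}) the differential sends a generator $W_\sigma$ for $\sigma = \{J_1 < \ldots < J_\ell\}$ to a $\ko[m]$-linear combination of shorter wedges whose coefficients are the monomials $v^{X_{\sigma, J_i}}$. Such a coefficient lies in the maximal graded ideal $\ko[m]^{+}$ if and only if $X_{\sigma, J_i} \neq \varnothing$. Hence $(\R_T, d_T)$ is minimal if and only if
$$J_i \not\subseteq \bigcup_{k \neq i} J_k$$
holds for every subset $\sigma = \{J_1, \ldots, J_\ell\} \subseteq N(K)$ with $\ell \geq 1$ and every index $i$. The case $\ell = 1$ is automatic, since $X_{\{J\}, J} = J$ and $\varnothing \notin N(K)$ because the empty set is always a simplex of $K$.

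It remains to compare this family of conditions with (\ref{eqMinimalityCond}). If (\ref{eqMinimalityCond}) holds, then for any $\sigma \subseteq N(K)$ containing $J_i$ one has
$$\bigcup_{k \neq i} J_k \;\subseteq\; \bigcup_{I \in N(K),\, I \neq J_i} I,$$
so $J_i$ fails to lie in the smaller union as well; this yields minimality. Conversely, specialising the above condition to the maximal choice $\sigma = N(K)$ and letting $J_i$ range over all elements of $N(K)$ recovers (\ref{eqMinimalityCond}) directly.

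The argument presents no real obstacle: all the content is in the observation that the conditions coming from the various differentials of $\R_T$ are monotone in $\sigma$ with respect to inclusion of subsets of $N(K)$, so the single largest case $\sigma = N(K)$ controls them all.
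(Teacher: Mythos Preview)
Your proof is correct and follows essentially the same route as the paper: both arguments reduce minimality to the nonvanishing of all the exponent sets $X_{\sigma,J}$ and then observe that, by monotonicity in $\sigma$, the single case $\sigma=N(K)$ already implies (and is implied by) all the others. Your added remark that the $\ell=1$ case is automatic since $\varnothing\notin N(K)$ is a small clarification the paper leaves implicit.
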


\begin{proof}
By definition, $\R_T$ is minimal if $d_T(R_T^{-\ell})\subseteq
\ko[m]^{+}\cdot R_T^{-\ell+1}$ for each $\ell\geqslant 0$. From
\eqref{eqTaylorDif} follows that $d_T(R_T^{-\ell})\subseteq
\ko[m]^{+}\cdot R_T^{-\ell+1}$ if and only if $v^{X_{\sigma,J}}\in
\ko[m]^{+}$ for each $\sigma\subseteq N(K)$ and $J\in\sigma$. This
is equivalent to $X_{\sigma,J}\neq\varnothing$. By definition,
$X_{\sigma,J}=J\setminus \left(\bigcup_{I\in \sigma, I\neq J}
I\right)$. If the Taylor resolution is minimal, then, in
particular, $X_{N(K),J}\neq\varnothing$, which is precisely the
condition \eqref{eqMinimalityCond} of the lemma. On the other
hand, $X_{N(K),J}\neq\varnothing$ implies
$X_{\sigma,J}\neq\varnothing$ for any $\sigma\subseteq N(K)$.
\end{proof}

\begin{lemma}\label{lemMinTayTorStruc}
If the Taylor resolution of $\ko[K]$ is minimal, then
$\Tor^{*,*}_{\ko[m]}(\ko[K],\ko)$ has the following description:
\begin{itemize}
\item It is generated as a vector space over $\ko$ by $W_{\sigma}$ for
$\sigma\subseteq N(K)$;
\item The multidegree is given by \eqref{eqMdegOfW};
\item The multiplication is given by
\begin{equation}
W_{\sigma}\cdot W_{\tau}=\begin{cases}
\sgn(\sigma,\tau)W_{\sigma\sqcup\tau}, \mbox{ if }
\sigma\cap\tau=\varnothing \mbox{ and }
(\bigcup_{J\in\sigma}J)\cap(\bigcup_{I\in\tau}I)=\varnothing \\
0, \mbox{ otherwise.}
\end{cases}
\end{equation}
\end{itemize}
\end{lemma}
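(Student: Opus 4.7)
My plan is to deduce the description directly by computing $\R_T \otimes_{\ko[m]} \ko$ as a differential graded algebra and observing that under the minimality hypothesis the differential vanishes, so this tensor product itself is the $\Tor$-algebra.

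First I would invoke \eqref{eqTorTaylor}, which identifies $\Tor^{*,*}_{\ko[m]}(\ko[K],\ko)$ with $H^*(\R_T \otimes_{\ko[m]} \ko;\, d_T \otimes_{\ko[m]} \ko)$. By the definition of minimality, $d_T(\R_T) \subseteq \ko[m]^{+} \cdot \R_T$, so $d_T \otimes_{\ko[m]} \ko = 0$. Therefore $\Tor^{*,*}_{\ko[m]}(\ko[K],\ko) \cong \R_T \otimes_{\ko[m]} \ko$ as $\Zo^{m+1}$-graded $\ko$-vector spaces. Since $R_T^{-\ell} = \Lambda^{\ell}[\{w_J\}] \otimes \ko[m]$, tensoring with $\ko = \ko[m]/\ko[m]^{+}$ over $\ko[m]$ kills the polynomial variables and leaves a $\ko$-basis consisting of the elements $W_\sigma = w_{J_1} \wedge \ldots \wedge w_{J_\ell}$, one for each subset $\sigma = \{J_1 < \ldots < J_\ell\} \subseteq N(K)$. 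This yields the first bullet. The multidegree is inherited unchanged from the multigrading on $\R_T$ as prescribed by \eqref{eqMdegOfW}, giving the second bullet.

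For the multiplication I would use Remark \ref{remMultSame}, which guarantees that the product on $\Tor^{*,*}_{\ko[m]}(\ko[K],\ko)$ induced from the Taylor resolution coincides with the usual $\Tor$-algebra multiplication from Construction \ref{conMultInTor}. Thus it suffices to reduce the product \eqref{eqTaylorMultGener} modulo $\ko[m]^{+}$. If $\sigma \cap \tau \neq \varnothing$ then $W_\sigma \cdot W_\tau = 0$ already in $\R_T$. Otherwise the product equals $\sgn(\sigma,\tau)\, v^{Y_{\sigma,\tau}}\, W_{\sigma \sqcup \tau}$, with $Y_{\sigma,\tau} = (\bigcup_{J \in \sigma} J) \cap (\bigcup_{I \in \tau} I)$. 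After tensoring with $\ko$, the monomial $v^{Y_{\sigma,\tau}}$ becomes $1$ if $Y_{\sigma,\tau} = \varnothing$ and vanishes (as an element of $\ko[m]^{+}$) otherwise. This gives exactly the multiplication rule in the third bullet.

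The only delicate point is the identification of the two a priori different multiplicative structures on $\Tor$, but this is precisely the content of Remark \ref{remMultSame} (with the reference to \cite[Constr.~2.3.2]{Avr}), so the proof reduces to bookkeeping once that identification is cited. No separate invocation of Lemma \ref{lemmaTayMin} is needed here because the only input is the definitional form of minimality $d_T \otimes_{\ko[m]} \ko = 0$; the combinatorial criterion of that lemma is what one uses in applications to verify the hypothesis, not inside the proof of the present description.
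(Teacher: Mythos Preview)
Your proof is correct and follows exactly the approach the paper has in mind: the paper's own proof is merely the one-line assertion that the statement ``follows easily from the construction of Taylor resolution and the definition of minimality,'' and your argument is precisely the natural unpacking of that assertion. Every step you take---the vanishing of $d_T\otimes_{\ko[m]}\ko$ by minimality, the resulting identification of $\Tor$ with $\R_T\otimes_{\ko[m]}\ko$, the reduction of \eqref{eqTaylorMultGener} modulo $\ko[m]^{+}$, and the appeal to Remark~\ref{remMultSame} for the multiplicative structure---is what the paper implicitly intends.
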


The proof follows easily from the construction of Taylor
resolution and the definition of minimality.

For complexes with the minimal Taylor resolution bigraded Betti
numbers are expressed in combinatorial terms.

\begin{equation}\label{eqBetaForMiniTayl}
\beta^{-\ell,2j}(K)= \#\left\{\sigma\subseteq N(K) \middle|
|\sigma|=\ell, \left|\bigcup_{J\in\sigma}J\right|=j\right\}
\end{equation}

\subsection{Proof of Theorem \ref{thmBettiBuch}} At last, we have
all necessary ingredients to prove Theorem~\ref{thmBettiBuch}. As
a starting point take complexes $L_1$ and $L_2$ defined in Example
\ref{exSetColl}. Our plan is the following:

\begin{enumerate}
\item To upgrade $L_1$ and $L_2$ to the new complexes $K_1$ and $K_2$
satisfying condition \eqref{eqMinimalityCond} (Taylor resolution
is minimal);
\item To prove that
$\beta^{-\ell,2j}(K_1)=\beta^{-\ell,2j}(K_2)$ using formula
\eqref{eqBetaForMiniTayl};
\item To prove that $s(K_1)=1$ and $s(K_2)\geqslant 2$.
\item Final technical remarks: $\dim(K_1)=\dim(K_2)$,
$\gamma(K_1)=\gamma(K_2)$, and algebra isomorphism
$\Tor_{\ko[m]}(\ko[K_1],\ko) \cong \Tor_{\ko[m]}(\ko[K_2],\ko)$.
\end{enumerate}

\textbf{Step 1.} Let $L$ be any complex on a set $[m]$ with the
set of minimal non-simplices $N(L)$. For each $J\in N(L)$ consider
a symbol $a_J$. Define the complex $\res{L}$ on the set
$V=[m]\sqcup\{a_J\mid J\in N(K)\}$ with the set of minimal
non-simplices given by
\begin{equation}\label{eqLresolved}
\res{J}\in N(\res{L}) \Leftrightarrow
\res{J}=J\sqcup\{a_J\}\subset V \mbox{ for } J\in N(K)
\end{equation}
The Taylor resolution of the complex $\res{L}$ is minimal. Indeed,
any $\res{J}\in N(\res{L})$ contains the vertex $a_J$ which does
not belong to other minimal non-simplices. Therefore, condition
\eqref{eqMinimalityCond} holds for $\res{L}$.

Now we apply this construction to $L_1$ and $L_2$. Recall that
$N(L_i)=\{I\subset \Sn\mid \Sn\setminus I\in \Cc_i\}$ for $i=1,2$
and collections of subsets $\Cc_1,\Cc_2$ shown on
fig.\ref{pictSetCollections}. Set $K_i = \res{L_i}$ for $i=1,2$.
Both $K_1$ and $K_2$ have $9+6=15$ vertices.

\textbf{Step 2.} Apply \eqref{eqBetaForMiniTayl} to $K_i$:
\begin{multline}\label{eqBetaResolved}
\beta^{-\ell,2j}(K_i)= \#\left\{\sigma\subseteq N(K_i) \middle|
|\sigma|=\ell, \left|\bigcup_{\res{J}\in\sigma}\res{J}\right|=j\right\}=\\
\#\left\{\sigma\subseteq N(L_i) \middle| |\sigma|=\ell,
\left|\bigcup_{J\in\sigma}\res{J}\right|=j\right\}.
\end{multline}
The last equality is the consequence of bijective correspondence
between $N(L_i)$ and $N(K_i)$, sending $J\in N(L_i)$ to
$\res{J}\in N(K_i)$. We have
$$
\bigcup_{J\in\sigma}\res{J}=\bigcup_{J\in\sigma}(J\sqcup\{a_J\}) =
\left(\bigcup_{J\in\sigma}J\right)\sqcup\{a_J\mid J\in\sigma\},
$$
therefore
$$
\left|\bigcup_{J\in\sigma}\res{J}\right| =
\left|\bigcup_{J\in\sigma}J\right| + |\sigma|.
$$
Returning to \eqref{eqBetaResolved},
\begin{multline}
\beta^{-\ell,2j}(K_i)=\#\left\{\sigma\subseteq N(L_i)\middle|
|\sigma|=\ell, \left|\bigcup_{J\in\sigma}\res{J}\right|=j\right\} = \\
= \#\left\{\sigma\subseteq N(L_i)\middle| |\sigma|=\ell,
\left|\bigcup_{J\in\sigma}J\right|=j-\ell\right\}=\\
= \#\left\{\sigma\subseteq \Cc_i\middle| |\sigma|=\ell,
\left|\bigcap_{A\in\sigma}A\right|=9-(j-\ell)\right\}.
\end{multline}
The last equality follows from the definition of $L_i$, since
$N(L_i)$ consists of complements to subsets of the collection
$\Cc_i$. By analyzing fig.\ref{pictSetCollections} we see that for
each $\ell$ and $j$
$$
\#\left\{\sigma\subseteq \Cc_1 \middle| |\sigma|=\ell,
\left|\bigcap_{A\in\sigma}A\right|=9-(j-\ell)\right\}=
\#\left\{\sigma\subseteq \Cc_2 \middle| |\sigma|=\ell,
\left|\bigcap_{A\in\sigma}A\right|=9-(j-\ell)\right\}.
$$
Indeed, in both $\Cc_1$ and $\Cc_2$ there are $3$ subsets of
cardinality $2$, $3$ subsets of cardinality $3$, $6$ pairwise
intersections of cardinality $1$, and all other intersections are
empty. Therefore, $\beta^{-\ell,2j}(K_1)=\beta^{-\ell,2j}(K_2)$.
The nonzero bigraded Betti numbers calculated by the described
method are presented in fig.\ref{pictBettiSetCol} (empty cells are
filled with zeroes).

\begin{figure}[h]
\begin{center}
\includegraphics[scale=0.4]{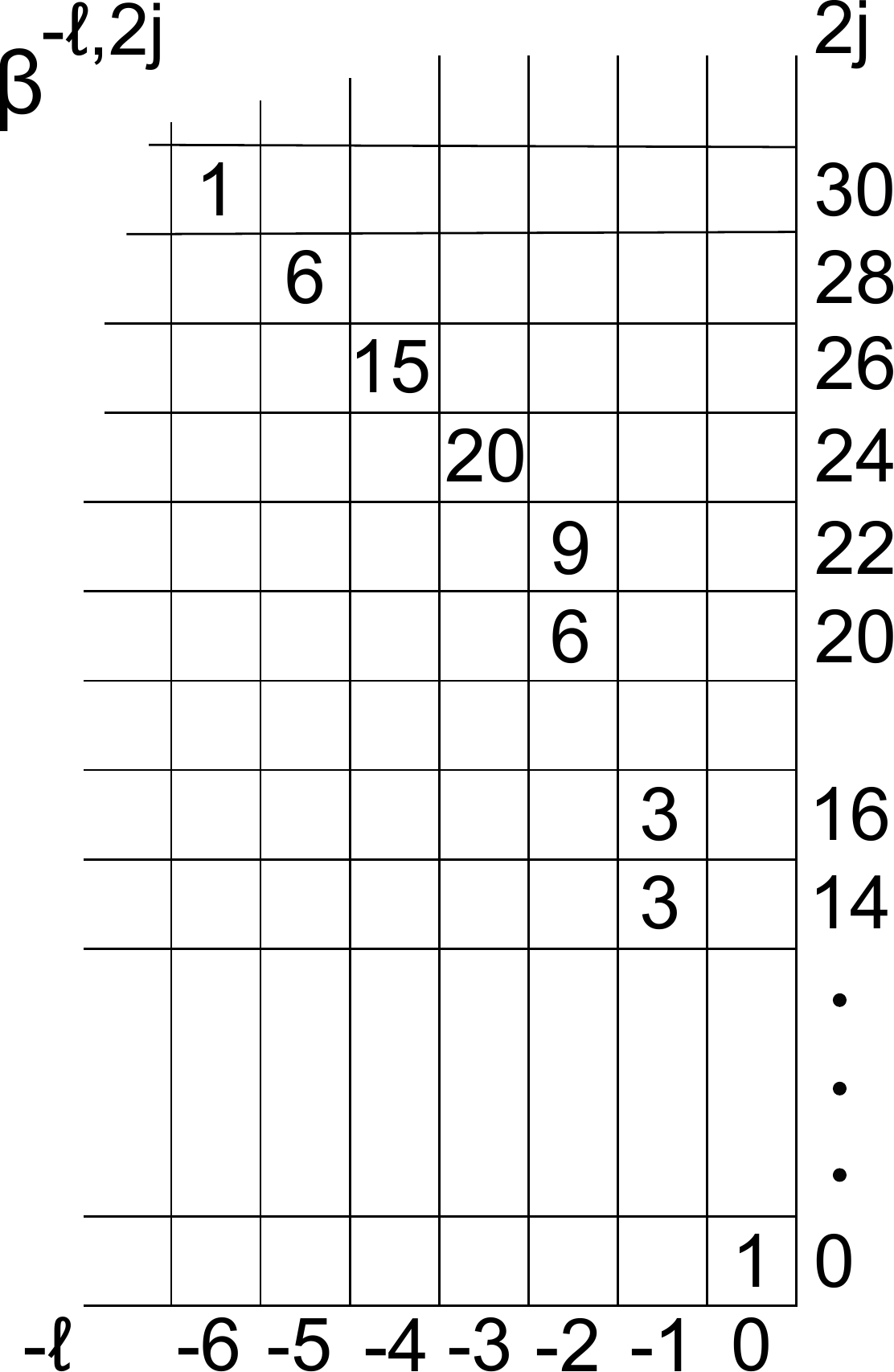}
\end{center}
\caption{Bigraded Betti numbers of $K_1$ and
$K_2$}\label{pictBettiSetCol}
\end{figure}

\textbf{Step 3.} We use the following simple observation.
Condition \eqref{eqErokhCrit} of Statement \ref{stmErokhCrit}
holds for the complex $L$ whenever it holds for $\res{L}$. Indeed,
$\res{J_1}\cap\res{J_2}\cap\res{J_3} = (J_1\sqcup\{a_{J_1}\})\cap(
J_2\sqcup\{a_{J_2}\})\cap (J_3\sqcup\{a_{J_1}\})=J_1\cap J_2\cap
J_3$. As observed in Example \ref{exSetColl} condition
\eqref{eqErokhCrit} of Statement \ref{stmErokhCrit} holds for
$L_2$ and does not hold for $L_1$. Therefore it also holds for
$K_2=\res{L_2}$ and does not hold for $K_1=\res{L_1}$. Thus
$s(K_1)\neq s(K_2)$ and $\sr(K_1)\neq \sr(K_2)$.

\textbf{Step 4.} Final remarks.

\begin{rem} Let us prove that $\dim K_1=\dim K_2=12$. Consider the
complement to the set $\{1,4\}$ in the set of vertices of $K_1$
(see fig. \ref{pictSetCollections}):
\begin{equation*}
S=\{1,2,\ldots,9,a_1,\ldots,a_6\}\setminus \{1,4\}.
\end{equation*}
Suppose that $S\notin K_1$. Then there exists $\res{J}\in N(K_1)$
such that $\res{J}\subseteq S$. Therefore, $\{1,4\}=\Sn\setminus
S\subseteq \Sn\setminus \res{J}$. By construction, $\Sn\setminus
\res{J}\in \Cc_1$. But $\{1,4\}$ is not contained in any $A\in
\Cc_1$ --- the contradiction. Thus $S\in K_1$ and $\dim
K_1\geqslant |S|-1=12$. Similar reasoning shows that there is no
simplex in $K_1$ of cardinality $14$ (because any singleton lies
in some $A\in \Cc_1$). Therefore $\dim K_1$ is exactly $12$. Same
for $K_2$.
\end{rem}

\begin{rem} In both complexes $K_1$ and $K_2$ there are no minimal
non-simplices of cardinality $1$ and $2$. Therefore all pairs of
vertices in $K_1$ and $K_2$ are connected by edges, so
$1$-skeletons $K_1^{(1)}$, $K_2^{(1)}$ are complete graphs on $15$
vertices. Thus chromatic numbers coincide
$\gamma(K_1)=\gamma(K_2)=15$.
\end{rem}

\begin{rem} $\Tor$-algebras of $K_1$ and $K_2$ are isomorphic as
algebras. Actually, the products in $\Tor_{\ko[15]}(\ko[K_1],\ko)$
and $\Tor_{\ko[15]}(\ko[K_2],\ko)$ are trivial by dimensional
reasons (see fig. \ref{pictBettiSetCol}): products of nonzero
elements always hit zero cells. The triviality of multiplication
can be deduced also from Lemma \ref{lemMinTayTorStruc} but this
approach requires a complicated combinatorial reasoning.
\end{rem}

These remarks conclude the proof of Theorem~\ref{thmBettiBuch}.

\subsection{Other invariants coming from $\Z_K$}
\begin{rem}
Question \ref{queInvProblem} is answered in the negative if
$A(\cdot)$ is a collection of bigraded Betti numbers. We may ask
the same question for $A(\cdot)$ --- the collection of all
multigraded Betti numbers $\beta^{-i,2\overline{j}}(K)\eqd\dim
\Tor_{\ko[m]}^{-i,2\overline{j}}(\ko[K],\ko)$.

Eventually, this question does not make sense. Multigraded Betti
numbers are too strong invariants:
$\beta^{-1,2\overline{j}}(K)=\beta^{-1,2\overline{j}}(L)$ implies
$K=L$. Indeed, for a subset $A\subseteq[m]$ the condition
$\beta^{-1,2A}(K)\neq 0$ is equivalent to $A\in N(K)$ by the
construction of the Taylor resolution (also by Hochster's formula
\cite[Th.3.2.9]{BPnew}). Therefore multigraded Betti numbers
encode all minimal non-simplices and determine the complex $K$
uniquely.
\end{rem}

\begin{rem}
Question \ref{queInvProblem} may be formulated for an equivariant
cohomology ring of $\Z_K$. This task is not interesting as well.
Indeed, $H^*_{T^m}(\Z_K;\ko)\cong \ko[K]$ (see \cite{DJ} or
\cite{BP2}). It is known, that the Stanley--Reisner algebra
$\ko[K]$ determines the combinatorics of $K$ uniquely~\cite{BG}.
Therefore multiplicative isomorphism $H^*_{T^m}(\Z_{K_1};\ko)\cong
H^*_{T^m}(\Z_{K_2};\ko)$ implies $K_1\cong K_2$ and, in
particular, $s(K_1)=s(K_2)$.
\end{rem}

\section{Conclusion and open questions}\label{SecConclusion}

Constructions of Buchstaber invariants and bigraded Betti numbers
are defined for any simplicial complex. Nevertheless, in toric
topology the most important are simplicial complexes arising from
polytopes.

Let $P$ be a simple polytope with $m$ vertices. The polar dual
polytope $P^*$ is simplicial. The complex $K_P=\partial P^*$ is a
simplicial sphere with $m$ vertices. It is known \cite{BP2,BP}
that $\Z_{K_P}$ is a smooth compact manifold and the action of
$T^m$ on $\Z_{K_P}$ is smooth. The algebraic version of this fact
is Avramov--Golod theorem \cite[Th.3.4.4]{BPnew}. It states the
following. The $\Tor$-algebra $\Tor^{*,*}_{\ko[m]}(\ko[K];\ko)$ is
a (multigraded) Poincare duality algebra if and only if the
complex $K$ is Gorenstein*. Any simplicial sphere $K$ is
Gorenstein* \cite[Th.5.1]{Stan}. In particular, for any simple
polytope $P$ the complex $K_P$ is Gorenstein*, thus
$\Tor^{*,*}_{\ko[m]}(\ko[K_P];\ko)$ is a Poincare duality algebra.
This is not surprising since
$\Tor^{*,*}_{\ko[m]}(\ko[K_P];\ko)\cong H^*(\Z_{K_P};\ko)$ and
$\Z_{K_P}$ is a manifold.

The problems solved in this paper can be posed for particular
classes of simplicial complexes, for example boundaries of
simplicial polytopes or simplicial spheres.

\begin{problem}
Does $s(K_P)=\sr(K_P)$ for any simple polytope $P$?
\end{problem}

The complex $U=\Ur_4$ constructed in the proof of
Theorem~\ref{thmRealComplex} is not a boundary of a polytope; it
is not a simplicial sphere as well. Nevertheless, $U$ is
Cohen--Macaulay as proved in~\cite[Th.2.2]{DJ}.

Another problem can also be formulated for the class of polytopes.

\begin{problem}\label{probBettiBuch}
Does $\beta^{-i,2j}(K_P)=\beta^{-i,2j}(K_Q)$ imply $s(K_P)=s(K_Q)$
or $\sr(K_P)=\sr(K_Q)$ for simple polytopes $P$ and $Q$? If no,
does an isomorphism of algebras
$\Tor^{*,*}_{\ko[m]}(\ko[K_P];\ko)\cong
\Tor^{*,*}_{\ko[m]}(\ko[K_Q];\ko)$ imply $s(K_P)=s(K_Q)$ or
$\sr(K_P)=\sr(K_Q)$?
\end{problem}

The complexes $K_1$ and $K_2$ constructed in Section
\ref{SecBettiBuch} are not simplicial spheres as well. One can
deduce this from the table of bigraded Betti numbers (fig.
\ref{pictBettiSetCol}): if the complexes were spheres the
distribution of bigraded Betti numbers would be symmetric
according to (bigraded) Poincare duality.

It is tempting to modify the construction of $K_1$ and $K_2$ of
Section \ref{SecBettiBuch} to obtain polytopal spheres in the
output. Unfortunately, this attempt fails due to the following
observation.

\begin{prop}\label{propMinTayForPoly}
Let $K$ be a simplicial sphere. The Taylor resolution of $\ko[K]$
is minimal if and only if $K$ is a join of boundaries of
simplices.
\end{prop}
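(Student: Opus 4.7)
The plan has two directions, and the harder one is ``only if.'' The ``if'' direction is immediate from Example~\ref{exProdSimpl}, which exhibits the Taylor resolution of a join $K = \partial \Delta_{M_1} \ast \cdots \ast \partial \Delta_{M_k}$ as a Koszul-type algebra and notes it is minimal: the minimal non-simplices $M_i$ are pairwise disjoint, so each contains a vertex missing from all the others, and Lemma~\ref{lemmaTayMin} applies.

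For the ``only if'' direction I would let $K$ be a simplicial sphere on $[m]$ of dimension $n-1$ with minimal Taylor resolution and $N(K) = \{J_1, \ldots, J_k\}$. First I would observe that $\bigcup_i J_i = [m]$: a vertex lying in no minimal non-simplex would make $K$ a cone over that vertex, contradicting the sphere hypothesis. Since $\ko[K]$ is Cohen--Macaulay, the Auslander--Buchsbaum formula gives $\mathrm{pd}\,\ko[K] = m-n$, which under minimality equals the length $k$ of the Taylor resolution, so $k = m-n$. Thus the top bigraded Betti number of $K$ is $\beta^{-k, 2m}(K) = 1$, realized uniquely by $\sigma = N(K)$, and $W_{N(K)}$ represents the fundamental class in the $\Tor$-algebra.

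The heart of the argument uses the Avramov--Golod theorem (cited just after the proposition in the paper): because $K$ is Gorenstein*, the $\Tor$-algebra is a bigraded Poincar\'e duality algebra. Hence for each $J_i$ the element $W_{\{J_i\}}\in \Tor^{-1,\,2|J_i|}$ admits a Poincar\'e dual $\alpha \in \Tor^{-(k-1),\,2(m-|J_i|)}$ with $W_{\{J_i\}} \cdot \alpha = W_{N(K)}\ne 0$. Expanding $\alpha$ in the basis of Lemma~\ref{lemMinTayTorStruc}, some $W_\tau$ with $|\tau|=k-1$ must satisfy $W_{\{J_i\}}\cdot W_\tau \ne 0$, which by that lemma forces $J_i\notin\tau$ and $J_i \cap \bigcup_{J\in\tau} J = \emptyset$. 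The only $(k-1)$-subset of $N(K)$ avoiding $J_i$ is $\tau = N(K)\setminus\{J_i\}$, so
\[
J_i \cap \Big(\bigcup_{J \ne J_i} J\Big) = \emptyset
\]
for every $i$. Thus the $J_i$ are pairwise disjoint, and combined with $\bigsqcup_i J_i = [m]$ (plus $|J_i|\ge 2$ since $\{v\}\in K$ for every vertex) the Stanley--Reisner ideal of $K$ factors as a sum of ideals on disjoint variable sets, yielding $K = \partial \Delta_{J_1} \ast \cdots \ast \partial \Delta_{J_k}$.

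The main obstacle is the appeal to bigraded (not merely total-degree) Poincar\'e duality on the $\Tor$-algebra of a simplicial sphere; this is exactly the content of the Avramov--Golod theorem that the paper cites, so I would invoke it directly rather than re-derive it. Once that duality is in hand, the rest of the argument is a purely combinatorial extraction from the explicit product formula of Lemma~\ref{lemMinTayTorStruc}.
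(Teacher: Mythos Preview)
Your proposal is correct and follows essentially the same route as the paper: both argue that every vertex lies in some minimal non-simplex (else $K$ is a cone), invoke Gorenstein* Poincar\'e duality on the $\Tor$-algebra to pair each $W_{\{J_i\}}$ against something nonzero, and then use the product formula of Lemma~\ref{lemMinTayTorStruc} to force $J_i$ disjoint from $\bigcup_{J\neq J_i}J$. The only difference is that your detour through Auslander--Buchsbaum to compute $k=m-n$ is unnecessary---the paper simply observes that $W_{N(K)}$ is the unique basis element of top multidegree and proceeds from there---but this extra step is harmless.
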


\begin{rem} For such $K$ it is easily shown that
$s(K)=\sr(K)=m~-~\dim~K~-~1$. Thus a counterexample to Problem
\ref{probBettiBuch} can not be constructed using minimal Taylor
resolutions.
\end{rem}

\begin{proof}[Proof of the proposition]
The ``if'' part is Example \ref{exProdSimpl}. Let us prove the
``only if'' part. Let $[m]$ be the vertex set of $K$. Any vertex
$i\in[m]$ is contained in at least one minimal non-simplex.
Otherwise, $K$ is a cone with the apex $i$, so $K$ is not a
sphere. Since the Taylor resolution is minimal, we may apply Lemma
\ref{lemMinTayTorStruc}. Complex $K$ is a sphere, thus $\ko[K]$ is
Gorenstein* and $\Tor^{*,*}_{\ko[m]}(\ko[K];\ko)$ is a multigraded
Poincare duality algebra. There should be a graded component of
$\Tor^{*,*}_{\ko[m]}(\ko[K];\ko)$ of maximal total degree which
plays the role of the ``fundamental cycle''. Obviously, this
component is generated by $W_{N(K)}$ in the notation of Lemma
\ref{lemMinTayTorStruc}. This component has multidegree
$(-|N(K)|,(2,2,\ldots,2))$. Non-degenerate pairing in Poincare
duality algebra $\Tor^{*,*}_{\ko[m]}(\ko[K];\ko)$ yields that for
each $\sigma\subseteq N(K)$ exists $\tau\subseteq N(K)$ such that
$W_{\sigma}\cdot W_{\tau} = \alpha W_{N(K)}$ with $\alpha\neq 0$.
Taking multigrading into account and applying Lemma
\ref{lemMinTayTorStruc} we get the following condition: for each
$\sigma\subseteq N(K)$ the vertex subsets $\bigcup_{J\in\sigma}J$
and $\bigcup_{J\in N(K)\setminus\sigma}J$ are disjoint. In
particular, any single non-simplex $J\in N(K)$ is disjoint from
the union of others. Therefore, $N(K)=\{J_1,\ldots,J_k\}$ and
$[m]=J_1\sqcup \ldots\sqcup J_k$. Thus
$K=(\partial\Delta_{J_1})\ast\ldots\ast(\partial\Delta_{J_k})$
which was to be proved.
\end{proof}

\section*{Acknowledgements}

I would like to thank Nickolai Erokhovets for useful discussions
and professor Xiangjun Wang from whom I knew the construction of a
Taylor resolution and its connection with moment-angle complexes.

\end{document}